\numberwithin{equation}{section}
\newtheorem{theorem}{Theorem}[section]
\newtheorem{proposition}[theorem]{Proposition}
\newtheorem{lemma}[theorem]{Lemma}
\newtheorem{definition}[theorem]{Definition}
\newtheorem{corollary}[theorem]{Corollary}
\newtheorem{question}[theorem]{Question}
\newtheorem{remark}[theorem]{Remark}
\DeclareMathOperator{\Hom}{Hom}
\DeclareMathOperator{\CAlg}{CAlg}
\DeclareMathOperator{\Mod}{Mod}
\DeclareMathOperator{\Cat}{Cat}
\DeclareMathOperator{\fib}{fib}
\DeclareMathOperator{\cofib}{cofib}
\DeclareMathOperator{\iHom}{\mathscr{H}\text{\kern -3pt {\calligra\large om}}\,}
\DeclareMathOperator{\iEnd}{\mathscr{E}\text{\kern -3pt {\calligra\large nd}}\,}
\DeclareMathOperator{\iExt}{\mathscr{E}\text{\kern -3pt {\calligra\large xt}}\,}
\DeclareMathOperator{\iTor}{\mathscr{T}\text{\kern -3pt {\calligra\large or}}\,}
\DeclareMathOperator{\colim}{colim}
\DeclareMathOperator{\laxcolim}{lax\,colim}
\tikzset{%
    symbol/.style={%
        draw=none,
        every to/.append style={%
            edge node={node [sloped, allow upside down, auto=false]{$#1$}}}
    }
}
\newcommand{\Spec}[1]{\mathrm{Spec} \, #1}
\newcommand{\Sp}{\mathrm{Sp}}
\newcommand\opposite[1]{{#1}^{\mathrm{op}}}
\newcommand{\id}{\mathrm{id}}
\newcommand{\QC}[1]{\mathrm{QCoh(#1)}}
\newcommand{\Coh}[1]{\mathrm{Coh(#1)}}
\newcommand{\Perf}[1]{\mathrm{Perf(#1)}}
\newcommand{\Ind}[1]{\mathrm{Ind(#1)}}
\newcommand{\heart}{\ensuremath\heartsuit}
\title{Descendability of Faithfully Flat Covers of Perfect Stacks}
\author{Andy Jiang}
\address{Institute of Mathematics, Academia Sinica, 6F, Astronomy-Mathematics Building, No. 1, Sec. 4, Roosevelt Road, Da-an, Taipei 106319, TAIWAN}
\email{jianga2718@gate.sinica.edu.tw}
\begin{document}

\maketitle

\begin{abstract}
  In \cite{grusonjensen}, Gruson and Jensen gave a new proof of the fact that, over a ring which is either Noetherian of Krull dimension $n$ or
  of cardinality $< \aleph_n$, the projective dimension of any flat module is at most $n$. In this short paper,
  we observe that their arguments apply to the setting of quasicoherent sheaves over perfect stacks.
  As a consequence, we show that for any perfect stack $\mathfrak{X}$ with a faithfully flat cover $p : \Spec{R} \to \mathfrak{X}$
  where $R$ is a Noetherian $\mathbb{E}_{\infty}$-ring of finite Krull dimension or satisfies
  the cardinality bound $2^{|\pi_*(R)|} < \aleph_{\omega}$, $p_*(\mathcal{O}_{\Spec{R}})$ is a descendable algebra in $\QC{\mathfrak{X}}$.
\end{abstract}

\section{Introduction}
After introducing the notion of a descendable algebra in \cite{descendable}, Mathew asked whether
there are examples of faithfully flat morphisms of $\mathbb{E}_{\infty}$-rings which are not descendable.
Recently, this question was settled by \cite{aoki} and \cite{zelich} independently,
who constructed faithfully flat extensions of classical rings which are not descendable.
In this paper, we study the relationship between faithful flatness and descendability of an algebra
in the category of quasicoherent sheaves on a perfect stack. As affine schemes are particular cases
of perfect stacks, we know from the aforementioned works that there is no implication from
faithful flatness to descendability in general. However, we observe that classical arguments of
\cite{grusonjensen} show that, like in the case of affine schemes, any faithfully flat algebra
which is not descendable must be rather \textit{large}.

\begin{theorem}
  Suppose $\mathfrak{X}$ is a quasi-geometric stack with a faithfully flat map
  \[p : \Spec{R} \to \mathfrak{X}\]
  Then $p_*(\mathcal{O}_{\Spec{R}})$ is descendable in any of the following cases
  \begin{enumerate}
  \item $\mathfrak{X}$ is a perfect stack and $R$ is Noetherian of finite Krull dimension.
  \item $\mathfrak{X}$ is a perfect stack and $2^{|\pi_*(R)|} < \aleph_{\omega}$.
  \item $\mathfrak{X}$ has finite cohomological dimension and $R$ is a discrete regular Noetherian ring of finite Krull dimension. \label{option3}
  \end{enumerate}
  In case (\ref{option3}), $\mathfrak{X}$ is automatically a perfect stack.
\end{theorem}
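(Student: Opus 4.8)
The plan is to reduce descendability to a uniform projective-dimension bound on flat quasicoherent sheaves and then to import the Gruson--Jensen estimate into the setting of a perfect stack. Write $A \coloneqq p_*\mathcal{O}_{\Spec{R}}$ for the algebra in $\QC{\mathfrak{X}}$ in question, and set $I \coloneqq \fib(\mathcal{O}_{\mathfrak{X}} \to A)$ and $\overline{I} \coloneqq \cofib(\mathcal{O}_{\mathfrak{X}} \to A)$. Recall that $A$ is descendable exactly when $\mathcal{O}_{\mathfrak{X}}$ lies in the thick tensor-ideal generated by $A$, equivalently when the composite $I^{\otimes n} \to \mathcal{O}_{\mathfrak{X}}$ is nullhomotopic for some finite $n$; the least such $n$ is the index of descent. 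Dually, faithful flatness of $p$ exhibits $\QC{\mathfrak{X}}$ as the totalization $\Tot$ of the Amitsur cosimplicial diagram of module categories attached to $A$, and descendability is precisely the statement that the associated $A$-Adams tower converges after a number of stages that is uniform in the object being resolved. The first task is to make this last assertion quantitative.

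Next I would record the dictionary between this uniform convergence and a cohomological bound. The graded pieces of the descent filtration of $\mathcal{O}_{\mathfrak{X}}$ are assembled from the tensor powers $\overline{I}^{\otimes k}$, and since $p$ is flat the algebra $A$ is flat, so each $\overline{I}^{\otimes k}$ is again a flat, connective object of $\QC{\mathfrak{X}}$. As $\mathfrak{X}$ is perfect we have $\QC{\mathfrak{X}} = \Ind{\Perf{\mathfrak{X}}}$, so the relevant notion of projective dimension is resolution length by dualizable (perfect) objects, and the descent tower collapses at stage $N$ once $\Ext^{i}_{\mathcal{O}_{\mathfrak{X}}}(\overline{I}^{\otimes k}, \mathcal{O}_{\mathfrak{X}})$ vanishes for $i > N$ and all $k$. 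Thus it suffices to bound, uniformly in $k$, the projective dimension of the flat sheaves $\overline{I}^{\otimes k}$; as tensor powers of a flat sheaf remain flat, this reduces to a single Gruson--Jensen-type statement about flat objects of $\QC{\mathfrak{X}}$.

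The technical heart, and the step I expect to be the main obstacle, is to establish this projective-dimension bound for flat sheaves on $\mathfrak{X}$ by transporting the Gruson--Jensen argument through the cover $p$. Pulling back along $p$ sends the flat sheaves above to flat $R$-modules (the terms of the Čech nerve $R^{\otimes_{\mathcal{O}_{\mathfrak{X}}} \bullet}$ and their tensor powers), where the classical theorem bounds projective dimension by the Krull dimension $n$ in case (1) and by the cardinality datum in case (2). The difficulty is that descendability is governed by projective dimension in $\QC{\mathfrak{X}}$, not in $\ModR{R}$; one must run the Gruson--Jensen dévissage \emph{internally} to $\Ind{\Perf{\mathfrak{X}}}$, using faithful flatness and conservativity of $p^{*}$ to reduce the cardinality and resolution-length bookkeeping to $R$-modules. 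In case (2) the delicate point is to propagate the transfinite filtration by subobjects of size $< \aleph_{m}$ through tensor powers and through the totalization while remaining below $\aleph_{\omega}$; this is exactly the counting that the hypothesis $2^{|\pi_*(R)|} < \aleph_{\omega}$ is designed to control. Combining the resulting uniform bound $N$ with the collapse criterion of the previous paragraph yields descendability with index bounded in terms of $N$.

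Finally I would treat case (3) together with the concluding perfectness claim. When $\mathfrak{X}$ has finite cohomological dimension, the requisite bound on the descent tower can be obtained directly on the structure-sheaf side, while regularity of the discrete Noetherian ring $R$ of finite Krull dimension makes every flat $R$-module of finite projective dimension and renders the cover perfect. Combining finiteness of the cohomological dimension of $\mathfrak{X}$ with the identification $\QC{\Spec{R}} = \Ind{\Perf{\Spec{R}}}$ then shows that $\QC{\mathfrak{X}}$ is compactly generated by dualizable objects with compact unit, so that $\mathfrak{X}$ is automatically a perfect stack, as asserted, and the argument of the preceding paragraphs applies verbatim to conclude descendability.
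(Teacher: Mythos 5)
Your first reduction is sound and agrees with the paper's: by faithful flatness, $\fib(\eta_A)^{\otimes k}$ is $k$-connective, so a uniform vanishing range $\Ext^i_{\QC{\mathfrak{X}}}(\overline{I}^{\otimes k},\mathcal{O}_{\mathfrak{X}})=0$ for $i>N$ forces $\fib(\eta_A)^{\otimes k}\to\mathcal{O}_{\mathfrak{X}}$ to be nullhomotopic once $k>N$. The gap is in how you propose to obtain that vanishing. ``Projective dimension as resolution length by dualizable (perfect) objects'' is not a workable invariant in $\QC{\mathfrak{X}}$: perfect complexes are not projective --- already on $\mathbb{P}^1$ (a perfect stack covered by a regular Noetherian ring of dimension $1$) one has $\Ext^1(\mathcal{O}(2),\mathcal{O})\cong H^1(\mathbb{P}^1,\mathcal{O}(-2))\neq 0$ --- so finite perfect resolutions cannot yield Ext-vanishing independent of $cd(\mathfrak{X})$, a quantity that never enters your argument for cases (1) and (2) but must appear in any correct bound; moreover, an object admitting a finite resolution by perfects is itself perfect, which the flat sheaves $\overline{I}^{\otimes k}$ are not. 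Your transport mechanism is also the unproven step: reducing the ``resolution-length bookkeeping'' to $R$-modules via conservativity of $p^*$ is essentially the assertion that a t-exact conservative functor bounds the relevant homological invariant, which the paper explicitly records as an open question ($\lambda$-dimensions). What the paper actually transports along $p^*$ is not Ext groups or resolutions but the dimension-of-support filtration on $\Coh{\mathfrak{X}}$ together with the Artinian-ness of the hearts of its subquotients (Gabriel's theorem); Artinian-ness of a heart manifestly descends along t-exact conservative functors, and the Gruson--Jensen d\'evissage is then run on $\opposite{\Coh{\mathfrak{X}}}$ itself.

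The essential missing idea is the functor-category (pure-injectivity) trick that replaces ``projective dimension of flat objects,'' which has no meaning over $\mathfrak{X}$. The paper embeds $\QC{\mathfrak{X}}$ into $\mathrm{Fun}^{ex}(\Coh{\mathfrak{X}},\Sp)$ by $F\mapsto\Gamma(F\otimes\_)$, shows this is fully faithful on maps into flat objects, and bounds the injective dimension of the image of $\mathcal{O}_{\mathfrak{X}}$ there by the $\lambda$-dimension of $\opposite{\Coh{\mathfrak{X}}}$ plus $cd(\mathfrak{X})$. Running your argument inside $\QC{\mathfrak{X}}$ itself would require $\mathcal{O}_{\mathfrak{X}}$ to have finite injective dimension, which genuinely fails for non-regular Noetherian covers; this is exactly why the paper's case (3), where the argument does stay inside $\QC{\mathfrak{X}}$ (an Auslander-type induction using that $\QC{\mathfrak{X}}^{\heart}$ is locally Noetherian and that, by regularity, its Noetherian objects are perfect with duals of bounded amplitude), needs $R$ regular --- regularity is not about flat modules having finite projective dimension (that holds for any Noetherian ring of finite Krull dimension) but about coherent sheaves being perfect. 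Finally, your treatment of case (3) is circular: you conclude perfectness of $\mathfrak{X}$ and then apply the perfect-stack argument ``verbatim,'' but perfectness is deduced (via Mathew's retract characterization of descendability) from the descendability of $p_*\mathcal{O}_{\Spec{R}}$, so descendability in case (3) must be, and in the paper is, established first without assuming $\mathfrak{X}$ perfect.
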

This will be shown in Proposition \ref{coverfflat}, Theorem \ref{mainthm1}, \ref{mainthm1.5}, \ref{mainthm2}, Corollary \ref{autoperfect} and Remark \ref{cardinalremark}.
\vspace{1em}
    
The paper is divided into two parts. The first part is a review of \cite{grusonjensen}, specifically Section 6 of \textit{loc. cit.}, with the arguments rewritten in
the language of idempotent-complete stable categories with a right-bounded t-structure, instead of abelian categories. We make two remarks. One, all idempotent-completeness
assumptions can be dropped with no effect. Two, the change in language from \cite{grusonjensen} make the results slightly more general; however
this additional generality is not essential to the present paper. This is because for any stable category with right-bounded t-structure $\mathscr{C}$,
we can consider the map (see \cite{kobstruct} Proposition 3.26) $D^{b}(\mathscr{C}^{\heart}) \to \mathscr{C}$, which is an isomorphism on the heart.
The $\lambda$-dimension of $\mathscr{C}$ is thus bounded above by the $\lambda$-dimension of $D^{b}(\mathscr{C}^{\heart})$ (see Definition \ref{lambdadef})
and all of our bounds on $\lambda$-dimensions can be deduced from the abelian case, which were established in \cite{grusonjensen}.
Both of these choices reflect stylistic choices of the author and we apologize for any confusion. The reader can safely
skip to Section \ref{section2} if they are familiar with the arguments in \cite{grusonjensen}.

The main arguments of this paper are in Section \ref{section2}. We summarize the strategy here. Suppose $\mathfrak{X}$ is a quasi-geometric stack of finite cohomological dimension
covered by a discrete regular Noetherian ring $R$ of dimension $n$ along the map $p$. The characterization of descendability in \cite{bhattscholze} reduces us to showing
that $\mathcal{O}_{\mathfrak{X}}$ has finite injective dimension. For $x \in \Perf{\mathfrak{X}}^{\heart}$, there is a global bound on the highest nonvanishing cohomology group
of $\Hom_{\QC{\mathfrak{X}}}(x,\mathcal{O}_{\mathfrak{X}})$ coming from the global dimension of $R$ and the cohomological dimension of $\mathfrak{X}$.
This implies, by an argument of Auslander,
that $\mathcal{O}_{\mathfrak{X}}$ has finite injective dimension as $\QC{\mathfrak{X}}^{\heart}$ is locally Noetherian with all Noetherian objects being perfect complexes. In general,
if $\mathfrak{X}$ is covered by a non-regular Noetherian ring, $\mathcal{O}_{\mathfrak{X}}$ may fail to have finite injective dimension.
Therefore, we modify the argument by ``embedding'' (interpreted loosely) $\QC{\mathfrak{X}}$ into a certain functor category, where the image
of $\mathcal{O}_{\mathfrak{X}}$ has finite injective dimension, following \cite{grusonjensen}. This is related to the concept of pure-injective dimensions of flat modules.

\textbf{Conventions}:
We follow the terminology of Lurie in \cite{HTT}, \cite{HA}, and \cite{SAG},
except for referring to an $(\infty,1)$-category simply as a category.
All functors, such as $\Hom$, $\otimes$, $\colim$, and $\lim$ are assumed to be defined in the $\infty$-categorical sense.
Module and quasicoherent sheaves will have no finiteness or boundedness assumptions unless clearly stated.

\textbf{Acknowledgements}:
I am grateful to Wille Liu for explaining a key step in the proof
of Theorem \ref{mainthm1.5}, from which this paper originated.
I thank Bhargav Bhatt, Dmitry Kubrak, Alexander Petrov, and Gleb Terentiuk for useful discussions.
I am thankful to Adeel Khan for his generous support and encouragement, and to Academia Sinica for providing an excellent research environment.
I also benefited from some discussions with the AI model Gemini 2.5 Pro.

\section{Injective Dimension of t-Exact Functors}
In this section we give an exposition of some arguments of Section 6 of \cite{grusonjensen} applied to the slightly more general setting of idempotent-complete
stable categories with a right-bounded t-structure, i.e. such that every object is cohomologically bounded.
This setting is equivalent to that of idempotent-complete prestable categories with finite limits.
This is because given any idempotent-complete prestable category with finite limits $\mathscr{C}$,
the Spanier-Whitehead construction (see \cite{SAG} C.1.1 for a detailed reference) associates to $\mathscr{C}$ a stable category,
\[SW(\mathscr{C}) := \colim (\mathscr{C} \xrightarrow{\Sigma} \mathscr{C} \xrightarrow{\Sigma} \ldots)\]
which carries a right-bounded t-structure whose connective part is $\mathscr{C}$ (see \cite{SAG} Proposition C.1.2.9). 
On the other hand, any idempotent-complete stable category $\mathscr{D}$ with a right-bounded t-structure is reconstructed from its connective part by the same procedure.
The importance of this setup for us is that, for any idempotent-complete stable category $\mathscr{C}$ with a right-bounded t-structure, we have the isomorphisms
\[\Ind{\mathscr{C}} \cong \Ind{SW(\mathscr{C}_{\ge 0})} \cong \Sp(\Ind{\mathscr{C}_{\ge 0}})\]
so that $\Ind{\mathscr{C}}$ is the stablization of a compactly-generated Grothendieck prestable category, see \cite{SAG} Remark C.1.1.6
for an explanation of the second isomorphism.

\begin{definition} \label{lambdadef}
  Let $\mathscr{C}$ be a small idempotent-complete stable category with a right-bounded t-structure.
  We say that $\mathscr{C}$ has $\lambda$-dimension at most $n$ if every t-exact functor (of stable categories)
  \[F : \opposite{\mathscr{C}} \to \Sp\]
  has injective dimension at most $n$ in $\Ind{\mathscr{C}} \cong \mathrm{Fun}^{ex}(\mathscr{C}^{\operatorname{op}},\Sp)$\footnote{See \cite{dualizable} Example 1.5}, i.e. 
  \[\Hom(X,F) \in \mathrm{Sp}^{[0,n]}\]
  for all $X \in \Ind{\mathscr{C}}^{\heart}$. Note that $\Hom(X,F)$ is automatically coconnective.
\end{definition}

The following is a mild generalization of Theorem 6.8 in \cite{grusonjensen}.
\begin{proposition} \label{lambdabound1}
  Suppose $\mathscr{C}_0$, $\mathscr{C}_1$, and $\mathscr{C}_2$ are small idempotent-complete stable categories with right-bounded t-structures such that
  \[\mathscr{C}_0 \xrightarrow{i_1} \mathscr{C}_1 \xrightarrow{p_2} \mathscr{C}_2\]
  is a short exact sequence in $\Cat^{\mathrm{perf}}$ comprising of t-exact functors.
  Then $\mathscr{C}_1$ has $\lambda$-dimension at most $m+n+1$ if $\mathscr{C}_0$ has $\lambda$-dimension at most $m$ and $\mathscr{C}_2$ has $\lambda$-dimension at most $n$.
\end{proposition}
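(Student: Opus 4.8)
The plan is to Ind-complete the sequence and run a dévissage through the resulting recollement. Write $\mathcal{A}=\Ind{\mathscr{C}_0}$, $\mathcal{B}=\Ind{\mathscr{C}_1}$, $\mathcal{C}=\Ind{\mathscr{C}_2}$. Ind-completion carries the short exact sequence in $\Cat^{\mathrm{perf}}$ to a localization sequence of compactly generated stable categories, which (all three being compactly generated) refines to a recollement with adjoint triples $i^{*}\dashv i_{*}\dashv i^{!}$ and $j_{!}\dashv j^{*}\dashv j_{*}$; here $i_{*}=\Ind{i_1}$ and $j^{*}=\Ind{p_2}$, while $i^{!}$ and $j_{*}$ are restriction along $\opposite{i_1}$ and $\opposite{p_2}$. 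Because $i_1$ and $p_2$ are t-exact, precomposition with them preserves t-exactness, so $i^{!}$ and $j_{*}$ carry t-exact functors to t-exact functors, and the localization $j^{*}$ is t-exact for the induced t-structures. In particular, for a t-exact $F\colon\opposite{\mathscr{C}_1}\to\Sp$ the restriction $i^{!}F=F\circ\opposite{i_1}$ is again t-exact, so it has injective dimension at most $m$ by the hypothesis on $\mathscr{C}_0$; likewise $j^{*}F$ is t-exact, of injective dimension at most $n$; and $j^{*}$ sends $\mathcal{B}^{\heart}$ into $\mathcal{C}^{\heart}$.

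Fix such an $F$ and an arbitrary $X\in\mathcal{B}^{\heart}$; I must show $\Hom_{\mathcal{B}}(X,F)\in\Sp^{[0,m+n+1]}$. Applying $\Hom_{\mathcal{B}}(X,-)$ to the recollement fiber sequence $i_{*}i^{!}F\to F\to j_{*}j^{*}F$ and using the adjunctions $i^{*}\dashv i_{*}$ and $j^{*}\dashv j_{*}$ produces a fiber sequence
\[\Hom_{\mathcal{A}}(i^{*}X,i^{!}F)\to\Hom_{\mathcal{B}}(X,F)\to\Hom_{\mathcal{C}}(j^{*}X,j^{*}F),\]
so it suffices to bound the two outer terms. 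The right-hand term is immediate: $j^{*}X\in\mathcal{C}^{\heart}$ and $j^{*}F$ is t-exact of injective dimension $\le n$, whence $\Hom_{\mathcal{C}}(j^{*}X,j^{*}F)\in\Sp^{[0,n]}$. For the left-hand term, $i^{!}F$ is t-exact of injective dimension $\le m$ while $i^{*}X$ is merely connective (as $i^{*}$ is right t-exact), and $\Ext^{s}_{\mathcal{A}}(i^{*}X,i^{!}F)$ vanishes once $s$ exceeds $m$ plus the top degree in which $i^{*}X$ carries homotopy.

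Everything therefore reduces to a single amplitude estimate, which is the content imported from Theorem 6.8 of \cite{grusonjensen} and which I expect to be the main obstacle: for $X\in\mathcal{B}^{\heart}$, the object $i^{*}X$ has homotopy concentrated in degrees $[0,n+1]$. Through the complementary recollement triangle $j_{!}j^{*}X\to X\to i_{*}i^{*}X$ and the vanishing of $\pi_{\ge 1}(X)$, one has $\pi_{k}(i^{*}X)\cong\pi_{k-1}(j_{!}j^{*}X)$ for $k\ge 2$, so this amounts to the assertion that the fully faithful gluing functor $j_{!}$ has homological dimension $\le n$ on the heart, i.e. $\pi_{k}(j_{!}j^{*}X)=0$ for $k>n$. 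This is exactly the input supplied by the $\lambda$-dimension of $\mathscr{C}_2$ being $\le n$: testing $j_{!}j^{*}X$ against t-exact functors $F'$ gives $\Hom_{\mathcal{B}}(j_{!}j^{*}X,F')=\Hom_{\mathcal{C}}(j^{*}X,j^{*}F')\in\Sp^{[0,n]}$ (again using that $j^{*}F'$ is t-exact of injective dimension $\le n$ and $j^{*}X$ lies in the heart), and since t-exact functors suffice to detect the homotopy of connective objects this bounds the amplitude of $j_{!}j^{*}X$ by $n$. The single extra degree — the ``$+1$'' in the final bound — is produced by the connecting map of the recollement triangle, through the shift $\pi_{k}(i^{*}X)\cong\pi_{k-1}(j_{!}j^{*}X)$. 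The delicate point, and the heart of the argument, is precisely to see that the gluing costs \emph{exactly} one homological degree, so that the two contributions combine to $m+n+1$ rather than more.

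Granting the amplitude estimate, $\Ext^{s}_{\mathcal{A}}(i^{*}X,i^{!}F)=0$ for $s>m+(n+1)$, and combined with $\Hom_{\mathcal{C}}(j^{*}X,j^{*}F)\in\Sp^{[0,n]}$ the fiber sequence forces $\Hom_{\mathcal{B}}(X,F)\in\Sp^{[0,m+n+1]}$ for every $X\in\mathcal{B}^{\heart}$. As $F$ was an arbitrary t-exact functor, $\mathscr{C}_1$ has $\lambda$-dimension at most $m+n+1$, as claimed.
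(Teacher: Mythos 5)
Your argument is built on a recollement that does not exist in this generality, and this is fatal. Ind-completion of the exact sequence does give $i_{*}=\Ind{i_1}$ with a colimit-preserving right adjoint $i^{!}$ (restriction along $i_1$) and $j^{*}=\Ind{p_2}$ with a fully faithful right adjoint $j_{*}$ (restriction along $p_2$), and hence the triangle $i_{*}i^{!}F\to F\to j_{*}j^{*}F$; but the \emph{left} adjoints $i^{*}$ and $j_{!}$ require $i_{*}$ and $j^{*}$ to preserve all limits, which fails already in the paper's motivating example. Take $\mathscr{C}_0=\Coh{\mathbb{Z}}_{(0)}$ (finite complexes with torsion homology), $\mathscr{C}_1=\Coh{\mathbb{Z}}$, $\mathscr{C}_2\simeq\Coh{\mathbb{Q}}$: then $\Ind{\mathscr{C}_1}\simeq D(\mathbb{Z})$ and $j^{*}=-\otimes_{\mathbb{Z}}\mathbb{Q}$, which does not preserve the product $\prod_{p}\mathbb{Z}/p$ (the product is not torsion, so its rationalization is nonzero while the product of the rationalizations vanishes); likewise the inclusion $i_{*}$ of torsion complexes does not preserve this product. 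So neither $j_{!}$ nor $i^{*}$ exists, and with them collapse the rewriting $\Hom(X,i_{*}i^{!}F)\simeq\Hom_{\mathcal{A}}(i^{*}X,i^{!}F)$, the gluing triangle $j_{!}j^{*}X\to X\to i_{*}i^{*}X$, and the amplitude estimate $\pi_{k}(i^{*}X)\cong\pi_{k-1}(j_{!}j^{*}X)$, i.e.\ exactly the steps carrying your $m$ and your $+1$.

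There is a second, independent gap which is really the crux of the proposition: your claim that $j^{*}$ ``carries t-exact functors to t-exact functors.'' You conflate two different statements. It is true that $\Ind{p_2}$ is t-exact as a functor between the Ind-categories with their induced t-structures (so $j^{*}F$ is coconnective), but that does not imply that the \emph{object} $j^{*}F$, viewed as a functor $\opposite{\mathscr{C}_2}\to\Sp$, takes heart objects to discrete spectra. Using only the adjunctions that do exist, one computes for $X\in\mathscr{C}_1^{\heart}$ that $\Hom_{\mathcal{C}}(j^{*}X,j^{*}F)\simeq\cofib\bigl(\Hom_{\mathcal{B}}(X,i_{*}i^{!}F)\to\Hom_{\mathcal{B}}(X,F)\bigr)$, whose negative homotopy groups are $\Ext^{s+1}_{\mathcal{B}}(X,i_{*}i^{!}F)$ for $s\ge 1$. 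These need not vanish: the hypothesis bounds the injective dimension of $i^{!}F$ \emph{in} $\Ind{\mathscr{C}_0}$, but $i_{*}$ has no t-exact left adjoint and so does not preserve injectives, and Ext here is computed in $\Ind{\mathscr{C}_1}$. This is precisely the difficulty the paper's proof is engineered to avoid: it runs a minimal injective resolution of $F$ in $\Ind{\mathscr{C}_1}$, shows the right adjoint $i^{!}$ sends it to a minimal injective resolution of $i^{!}F$, so that after $m+1$ steps the remaining fiber $\fib(F\to\mathrm{Fil}^{m+1}F)[m+1]$ restricts to \emph{zero} on $\mathscr{C}_0$; only such functors descend to t-exact functors on $\opposite{\mathscr{C}_2}$, where the $\lambda$-dimension bound for $\mathscr{C}_2$ can then be applied. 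In short, the paper's dévissage uses only $i^{!}$ and $j_{*}$ and splices resolutions rather than Hom-sequences; your termwise recollement estimate cannot be repaired without supplying substitutes for $i^{*}$, $j_{!}$, and the unproved preservation of t-exactness by $j^{*}$.
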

\begin{proof}
  Let \[F : \opposite{\mathscr{C}_1} \to \mathrm{Sp}\]
  be a t-exact functor of stable categories. $F$ is coconnective in $\Ind{\mathscr{C}_1}$
  (since $\Hom(X,F) \in \Sp^{\ge 0}$ for all $X \in (\opposite{\mathscr{C}})_{\ge 0} \cong (\mathscr{C}^{\ge 0})^{\operatorname{op}}$)
  and thus we can (inductively) choose a minimal injective resolution of $F$ in $\Ind{\mathscr{C}_1}$ in the sense that
  \[F \cong \lim_k \mathrm{Fil}^n(F)\]
  where
  \[\mathrm{Fil}^0(F) \cong 0\]
  and for all $k \ge 0$
  \[\fib(F \to \mathrm{Fil}^k(F))[k] \to \fib(\mathrm{Fil}^{k+1}(F) \to \mathrm{Fil}^k(F))[k]\]
  is an injective hull in $\Ind{\mathscr{C}_1}$ in the sense of \cite{SAG} C.5.7.9. It suffices to show that $F \cong \mathrm{Fil}^{m+n+2}(F)$.

  We claim that the minimal injective resolution of $F$ is sent to a minimal injective resolution of $p_1(F)$ under the functor
  \[p_1 : \Ind{\mathscr{C}_1} \to \Ind{\mathscr{C}_0} \]
  which is right adjoint to $\Ind{i_1}$, the extension of $i_1$ to the Ind category.
  It suffices to show that $p_1$ preserves injective hulls. We verify the conditions. $p_1$ preserves injective objects as $\Ind{i_1}$ is t-exact.
  Because $p_1$ preserves coconnective objects (as $\Ind{i_1}$ preserves connective objects),
  it preserves maps of coconnective objects which are monomorphisms on $\pi_0$ (by observing that the cofibre is coconnective). Lastly, we must verify that
  \[H^0(p_1(\_)) : \Ind{\mathscr{C}_1}^{\heart} \to \Ind{\mathscr{C}_0}^{\heart}\]
  preserves essential extensions.

  Suppose $s : M \to N$ is an essential extension in $\Ind{\mathscr{C}_1}^{\heart}$ and $A$ is a subobject of $H^0(p_1(N))$ whose intersection with $H^0(p_1(M))$ is zero. We have the commutative diagram in $\Ind{\mathscr{C}_1}^{\heart}$ (the inclusion functor induced by $i_1$ is suppressed)
  \begin{equation} \label{checkpullback}
    \begin{tikzcd}
      H^0(p_1(M)) \arrow[r] \arrow[d] & H^0(p_1(N)) \arrow[d] \\
      M \arrow[r] & N
    \end{tikzcd}
  \end{equation}
  
  This can be seen as a pullback square as follows. The pullback $H^0(p_1(N)) \times_N M$ is inside the subcategory $\Ind{\mathscr{C}_0}^{\heart}$
  (as it is a subobject of $H^0(p_1(N))$ and the subcategory $\Ind{\mathscr{C}_0}^{\heart}$ is closed under subobjects--a fact which can be seen using the exactness of $\Ind{p_2}$).
  As $H^0(p_1(\_))$ preserves limits, it suffices to check (\ref{checkpullback}) is a pullback square after applying $H^0(p_1(\_))$, where it is clear.
  Therefore, $A$ can be viewed as a subobject of $N$ whose intersection with $M$ is zero--hence it is zero.

  Now because $\mathscr{C}_0$ has $\lambda$-dimension at most $m$ and $p_1(F)$ defines a t-exact functor in $\mathrm{Fun}^{ex}(\opposite{\mathscr{C}_0},\Sp)$,
  we have that $\fib(p_1(F) \to p_1(\mathrm{Fil}^m(F)))[m]$ is an injective object and therefore $p_1(F) \cong p_1(\mathrm{Fil}^{m+1}(F))$ (and same for $p_1(\mathrm{Fil}^{k}(F))$ for $k \ge m+1$).
  We can inductively show that the functors
  \[\fib(F \to \mathrm{Fil}^k(F))[k] : \opposite{\mathscr{C}_1} \to \mathrm{Sp}\] are t-exact for all $k \ge 0$
  by showing that for any $x \in \mathscr{C}^{\heart}_1$, the map
  \[\fib(F \to \mathrm{Fil}^k(F))[k](x) \to \fib(\mathrm{Fil}^{k+1}(F) \to \mathrm{Fil}^k(F))[k](x)\]
  is a monomorphism on $\pi_0$. But this simply follows from the fact that the map
  \[\fib(F \to \mathrm{Fil}^k(F))[k] \to \fib(\mathrm{Fil}^{k+1}(F) \to \mathrm{Fil}^k(F))[k]\]
  is a monomorphism on $\pi_0$. Therefore, for all $k \ge m+1$, $\fib(F \to \mathrm{Fil}^{k}(F))[k]$ is a t-exact functor
  in $\mathrm{Fun}^{ex}(\opposite{\mathscr{C}_1},\mathrm{Sp})$ which vanishes on $\opposite{\mathscr{C}_0}$, therefore they
  define t-exact functors from $\opposite{\mathscr{C}_2}$ to $\mathrm{Sp}$.

  Let 
  \[G : \opposite{\mathscr{C}_2} \to \mathrm{Sp}\]
  be the t-exact functor induced by $\fib(F \to \mathrm{Fil}^{m+1}(F))[m+1]$ and for each $k \ge 0$ let
  \[\mathrm{Fil}^{k}(G) : \opposite{\mathscr{C}_2} \to \mathrm{Sp}\]
  be the functor induced by $\fib(\mathrm{Fil}^{k+m+1}(F) \to \mathrm{Fil}^{m+1}(F))[m+1]$.
  Then
  \[G \cong \lim_{k}{\mathrm{Fil}^k(G)}\]
  We will show that the right hand side defines an injective resolution of $G$ (in $\Ind{\mathscr{C}_2}$).

  First, note that for $k \ge m+1$, $\fib(\mathrm{Fil}^{k+1}(F) \to \mathrm{Fil}^k(F))[k]$ is an injective object in $\Ind{\mathscr{C}_1}$
  which lives in the subcategory $i_2(\Ind{\mathscr{C}_2})$ (where $i_2$ is right adjoint to $\Ind{p_2}$)--since they receive no maps from $\Ind{\mathscr{C}_0}$.
  Therefore for all $k \ge 0$,
  \[\fib(\mathrm{Fil}^{k+1}(G) \to \mathrm{Fil}^{k}(G))[k]\]
  is injective when viewed as an object of $\Ind{\mathscr{C}_2}$ as
  \[i_2(\fib(\mathrm{Fil}^{k+1}(G) \to \mathrm{Fil}^{k}(G))[k]) \cong \fib(\mathrm{Fil}^{k+1}(F) \to \mathrm{Fil}^k(F))[k]\]
  and $\Ind{p_2}$ is exact.
  As $i_2$ reflects coconnective objects,
  \[\fib(G \to \mathrm{Fil}^k(G))[k] \to \fib(\mathrm{Fil}^{k+1}(G) \to \mathrm{Fil}^{k}(G))[k]\]
  is a map of coconnective objects in $\Ind{\mathscr{C}_2}$ which is a monomorphism on $\pi_0$ for all $k \ge 0$.

  Finally we need to show that
  \[\pi_0(\fib(G \to \mathrm{Fil}^k(G))[k]) \to \pi_0(\fib(\mathrm{Fil}^{k+1}(G) \to \mathrm{Fil}^{k}(G))[k])\]
  is an essential extension in $\Ind{\mathscr{C}_2}^{\heart}$.

  Now we have the isomorphisms (as $i_2$ is left t-exact)
  \[H^0(i_2(\pi_0(\fib(G \to \mathrm{Fil}^k(G))[k]))) \cong \pi_0(\fib(F \to \mathrm{Fil}^{k+m+1}(F))[k+m+1])\] 
  and
  \[H^0(i_2(\pi_0(\fib(\mathrm{Fil}^{k+1}(G) \to \mathrm{Fil}^k(G))[k]))) \cong \pi_0(\fib(\mathrm{Fil}^{k+m+2}(F) \to \mathrm{Fil}^{k+m+1}(F))[k+m+1])\] 
  in $\Ind{SW(\mathscr{C}_1)}^{\heart}$. As $H^0(i_2(\_))$ preserves subobjects and pullbacks, a subobject of \[\pi_0(\fib(\mathrm{Fil}^{k+1}(G) \to \mathrm{Fil}^k(G))[k])\]which
  does not intersect \[\pi_0(\fib(G \to \mathrm{Fil}^k(G))[k])\] is sent by $H^0(i_2(\_))$ to a subobject of \[\pi_0(\fib(\mathrm{Fil}^{k+m+2}(F) \to \mathrm{Fil}^{k+m+1}(F))[k+m+1])\]
  which does not intersect \[\pi_0(\fib(F \to \mathrm{Fil}^{k+m+1}(F))[k+m+1])\] Hence it is sent to zero under $H^0(i_2(\_))$ and therefore is zero--as
  \[H^0(i_2(\_)) : \Ind{\mathscr{C}_2}^{\heart} \to \Ind{\mathscr{C}_1}^{\heart}\]
  is fully faithful (because the counit of the adjunction $p_2 \dashv H^0(i_2)$ is $p_2 \circ H^0(i_2) \cong H^0(p_2 \circ i_2) \cong \id$).

  Therefore, we have that $G \cong \mathrm{Fil}^{n+1}(G)$ by the $\lambda$-dimension bound of $\mathscr{C}_2$. Hence $F \cong \mathrm{Fil}^{n+m+2}(F)$ as desired.
  
\end{proof}

We can bound the $\lambda$-dimension of an idempotent-complete stable category $\mathscr{C}$ with a right-bounded t-structure by properties of its heart.
We begin with some well-known lemmas.

%

\begin{lemma} \label{basicabelianlemma}
  Let $\mathscr{A}$ be an abelian category. For any $x \in \mathscr{A}$, we have
  \[\Ind{Sub_{\mathscr{A}}(x)} \cong \mathrm{Sub}_{\Ind{\mathscr{A}}}(x) \]
  where $\mathrm{Sub}_{\mathscr{C}}(x)$ is the poset of subobjects of $x$ in $\mathscr{C}$.
  As $\mathrm{Sub}_{\Ind{\mathscr{A}}}(x)$ has finite colimits, $\Ind{Sub_{\mathscr{A}}(x)}$
  is compactly generated.
\end{lemma}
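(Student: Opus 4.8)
The plan is to construct an explicit comparison functor and verify it is an equivalence of posets, using only the standard structure of $\Ind{\mathscr{A}}$: it is a Grothendieck abelian category in which the canonical exact fully faithful embedding $\mathscr{A} \hookrightarrow \Ind{\mathscr{A}}$ identifies $\mathscr{A}$ with the compact (finitely presented) objects, and in which filtered colimits are exact (well-poweredness guarantees $\mathrm{Sub}_{\Ind{\mathscr{A}}}(x)$ is a genuine poset). First I would observe that $\mathrm{Sub}_{\Ind{\mathscr{A}}}(x)$ admits directed unions, these being filtered colimits of subobjects, which remain monomorphic into $x$ by exactness of filtered colimits, and that the embedding carries $\mathrm{Sub}_{\mathscr{A}}(x)$ into $\mathrm{Sub}_{\Ind{\mathscr{A}}}(x)$ compatibly with the order. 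By the universal property of the Ind-completion this extends uniquely to a filtered-colimit-preserving functor $\Phi : \Ind{\mathrm{Sub}_{\mathscr{A}}(x)} \to \mathrm{Sub}_{\Ind{\mathscr{A}}}(x)$. Concretely, identifying $\Ind{\mathrm{Sub}_{\mathscr{A}}(x)}$ with the poset of directed, downward-closed subsets (ideals) of $\mathrm{Sub}_{\mathscr{A}}(x)$, the functor $\Phi$ sends an ideal $S$ to the union $\bigcup_{s \in S} s$ computed inside $x$. It then remains to show $\Phi$ is an order-isomorphism.

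For essential surjectivity, take any subobject $y \subseteq x$ in $\Ind{\mathscr{A}}$ and write $y \cong \colim_{i \in I} a_i$ as a filtered colimit of objects $a_i \in \mathscr{A}$. Each composite $a_i \to y \hookrightarrow x$ is a morphism in $\mathscr{A}$, so its image $y_i := \mathrm{im}(a_i \to x)$ lies in $\mathrm{Sub}_{\mathscr{A}}(x)$; since the second map is monic, $y_i$ is equally the image of $a_i \to y$. The assignment $i \mapsto y_i$ is order-preserving, so $\{y_i\}$ is a directed family, and by exactness of filtered colimits $\colim_i y_i = \mathrm{im}(\colim_i a_i \to y) = y$. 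Hence $y = \bigcup_i y_i = \Phi(S)$ for $S$ the downward closure of $\{y_i\}$, which is surjectivity.

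The main point is order-reflection (equivalently, full faithfulness for this thin functor). Suppose $S, T$ are ideals with $\Phi(S) \subseteq \Phi(T)$; I would show $S \subseteq T$. Fix $s \in S$, so $s \hookrightarrow \Phi(T) = \colim_{t \in T} t$, the colimit being filtered because $T$ is directed. As $s \in \mathscr{A}$ is compact in $\Ind{\mathscr{A}}$, this inclusion factors as $s \to t_0 \to \Phi(T)$ for some $t_0 \in T$, and the composite $s \to t_0 \hookrightarrow x$ equals the inclusion $s \hookrightarrow x$; since that composite is monic, $s \to t_0$ is monic and exhibits $s \le t_0$ in $\mathrm{Sub}_{\mathscr{A}}(x)$. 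As $T$ is downward closed, $s \in T$, whence $S \subseteq T$; the reverse implication is immediate, so $\Phi$ reflects the order and is an isomorphism of posets. This split between \emph{exactness of filtered colimits} (for surjectivity) and \emph{compactness of the objects of $\mathscr{A}$} (for injectivity) is the crux, and I expect the factorization-through-a-compact step to be the only delicate point.

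Finally, for the concluding assertion I would note that $\mathrm{Sub}_{\Ind{\mathscr{A}}}(x)$ is a complete lattice, in particular has finite colimits; transporting this along the isomorphism shows $\mathrm{Sub}_{\mathscr{A}}(x)$ has finite colimits (a least element $0$ and binary joins). Since a poset is automatically idempotent complete as a category, the Ind-completion of any small such poset is compactly generated, with the poset itself as its full subcategory of compact objects. Hence $\Ind{\mathrm{Sub}_{\mathscr{A}}(x)}$ is compactly generated, as claimed.
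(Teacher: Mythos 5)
Your proof is correct and takes essentially the same approach as the paper: both extend the embedding $\mathrm{Sub}_{\mathscr{A}}(x) \hookrightarrow \mathrm{Sub}_{\Ind{\mathscr{A}}}(x)$ along filtered colimits and prove essential surjectivity by replacing a filtered presentation of a subobject with its directed family of images, using exactness of filtered colimits. Your hands-on order-reflection argument via compactness of objects of $\mathscr{A}$ in $\Ind{\mathscr{A}}$ is just the unwound version of the fact the paper invokes, namely that a fully faithful functor landing in compact objects extends to a fully faithful functor on Ind-completions.
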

\begin{proof}
  As the Yoneda inclusion $\mathscr{A} \to \Ind{\mathscr{A}}$ is exact, there is a natural (fully faithful) functor
  \[\mathrm{Sub}_{\mathscr{A}}(x) \to \mathrm{Sub}_{\Ind{\mathscr{A}}}(x)\]
  Because filtered colimits are exact in $\Ind{\mathscr{A}}$, the right hand side has all small filtered colimits
  (which are computed in $\Ind{\mathscr{A}}$).
  This implies the above functor lands in the subcategory of compact objects, $\mathrm{Sub}_{\Ind{\mathscr{A}}}(x)^{\omega}$.
  Thus we can construct a fully faithful filtered colimit preserving functor
  \[\Ind{Sub_{\mathscr{A}}(x)} \to \mathrm{Sub}_{\Ind{\mathscr{A}}}(x) \]
  It suffices to show that it is essentially surjective.

  Suppose $\colim_{i \in I} y_i$ is a subobject of $x$ in $\Ind{\mathscr{A}}$, where the $y_i$'s are in $\mathscr{A}$ and $I$ is filtered.
  Then because filtered colimits are exact in $\Ind{\mathscr{A}}$,
  \[\colim_{i \in I} y_i \cong \mathrm{im}(\colim_{i \in I} y_i \to x) \cong \colim_{i \in I}(\mathrm{im}(y_i \to x))\]
  So we are done because $\mathrm{im}(y_i \to x)$ is a subobject of $x$ in $\mathscr{A}$.
\end{proof}

\begin{lemma} \label{posetlemma}
  Let $P$ be a poset. Then, $\Ind{P}$ is also a poset and there is an isomorphism between $\Ind{P}$
  and the poset of ideals on $P$. This isomorphism takes an ideal $I$ to the colimit of the Yoneda embedding restricted to the ideal.
  Here an ideal of a poset is a downward-closed, filtered subset.
\end{lemma}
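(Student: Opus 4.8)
The plan is to realize $\Ind{P}$ explicitly as a full subcategory of the presheaf category $\Psh(P) = \mathrm{Fun}(\opposite{P}, \mathscr{S})$, where $\mathscr{S}$ is the $\infty$-category of spaces, and then to identify its objects with the indicator presheaves of ideals. The single observation driving everything is that, because $P$ is a poset, each mapping space $\Hom_P(x,y)$ is \emph{subterminal} (empty or contractible). Subterminal spaces are exactly the $(-1)$-truncated objects of $\mathscr{S}$, and this class is closed both under arbitrary limits and under filtered colimits --- the latter because filtered colimits in $\mathscr{S}$ commute with the finite products detecting $(-1)$-truncatedness (equivalently, preservation of the diagonal being an equivalence).

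First I would show that $\Ind{P}$ is a poset. Writing objects of $\Ind{P}$ as filtered colimits of representables $h_x$ (the images of $x \in P$ under the Yoneda embedding), the standard formula for morphisms between ind-objects gives
\[\Hom_{\Ind{P}}\!\Big(\colim_{i} x_i, \colim_{j} y_j\Big) \cong \lim_i \colim_j \Hom_P(x_i, y_j).\]
Each $\Hom_P(x_i,y_j)$ is subterminal, so the inner filtered colimit is subterminal, and the outer limit is then subterminal as well. Hence every mapping space in $\Ind{P}$ is empty or contractible, i.e. $\Ind{P}$ is a poset.

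Next I would set up the correspondence. Given a filtered diagram into $P$, which for computing the colimit we may take to be the inclusion of a directed sub-poset $I$, evaluating $\colim_{x \in I} h_x$ at $z \in P$ yields $\colim_{x \in I}\Hom_P(z,x)$; by the same subterminal argument this is contractible exactly when $z \le x$ for some $x \in I$ and empty otherwise. Thus $\colim_{x\in I} h_x$ is the indicator presheaf $\mathbb{1}_{D}$ of the downward closure $D = \{z : \exists\, x \in I,\ z \le x\}$, and one checks directly that $D$ is again directed, hence an ideal; when $I$ is itself an ideal this recovers $\mathbb{1}_I$, matching the map in the statement. Conversely, a presheaf valued in subterminal spaces is determined by its support, which functoriality forces to be downward-closed, so it is $\mathbb{1}_D$ for a downward-closed $D$; its category of elements is equivalent to $D$ itself, and by the standard characterization of ind-objects inside presheaves, $\mathbb{1}_D$ lies in $\Ind{P}$ precisely when $D$ is filtered, i.e. an ideal. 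Combining the two directions identifies the objects of $\Ind{P}$ with the $\mathbb{1}_I$ for $I$ an ideal.

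Finally I would match the orders. The mapping-space formula gives $\Hom(\mathbb{1}_I, \mathbb{1}_{J}) \cong *$ iff every $x \in I$ lies below some element of $J$, which (as $J$ is downward-closed) holds iff $I \subseteq J$. Hence $I \mapsto \mathbb{1}_I = \colim_{x\in I} h_x$ is an order isomorphism from the poset of ideals of $P$, ordered by inclusion, onto $\Ind{P}$. The only delicate point --- and the one I expect to require care --- is bookkeeping around the $\infty$-categorical notion of filteredness: I would make explicit that a sub-poset of $P$ is filtered as an $\infty$-category exactly when it is directed in the usual combinatorial sense (nonempty and closed under pairwise upper bounds), and that passing between a downward-closed subset and its presheaf of elements respects this. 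Everything else reduces to the closure of subterminal spaces under limits and filtered colimits.
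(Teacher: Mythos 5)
Your proof is correct and takes essentially the same route as the paper's: both realize $\Ind{P}$ inside presheaves on $P$, identify representables with (principal) ideals, observe that the indicator presheaves of ideals are closed under filtered colimits, and note that each ideal is the colimit of the Yoneda embedding over itself. Your write-up simply makes explicit what the paper leaves implicit (the subterminal-space argument for why everything stays a poset, and the matching of the order relations).
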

\begin{proof}
  Every downward-closed subset of $P$ defines a presheaf on $P$.
  It's easy to see that every representable functor
  is defined by an ideal and the collection of presheaves defined by ideals is closed under filtered colimits.
  Every ideal is a filtered colimit along itself of the Yoneda embedding (by the proof of \cite{HTT} Lemma 5.1.5.3)
  so the result follows.
\end{proof}

The following is a standard fact about compactly generated categories, which is shown at the end
of the proof of \cite{HTT} Theorem 5.5.1.1. 
\begin{proposition} \label{kappacompactlemma}
  Let $\mathscr{C}$ be a compactly generated category and
  $\kappa$ be a regular cardinal. Any $\kappa$-compact object of
  $\mathscr{C}$ is a $\kappa$-small filtered colimit of compact objects.
\end{proposition}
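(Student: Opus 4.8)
The plan is to use the defining presentation $\mathscr{C} \cong \Ind{\mathscr{C}^{\omega}}$ of a compactly generated category, where $\mathscr{C}^{\omega}$ denotes the (essentially small, idempotent-complete) full subcategory of compact objects, together with a combinatorial decomposition of filtered index categories. Throughout we may assume $\kappa$ is uncountable: when $\kappa = \omega$ a $\kappa$-compact object is simply a compact object, which is tautologically the colimit of the constant diagram on the one-object (hence finite, filtered) category, so there is nothing to prove.

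First I would fix a $\kappa$-compact object $X$ and present it by its canonical filtered diagram: writing $I := (\mathscr{C}^{\omega})_{/X}$ for the filtered category of compact objects equipped with a map to $X$, one has $X \cong \colim_{i \in I} c_i$, where $c_i$ denotes the source of the $i$-th map. The key combinatorial input is a saturation lemma: for $\kappa$ uncountable regular, the partially ordered set $S$ of $\kappa$-small filtered full subcategories $J \subseteq I$ is $\kappa$-filtered, and the inclusions exhibit $I \cong \colim_{J \in S} J$. This is proved by starting from an arbitrary $\kappa$-small full subcategory and repeatedly adjoining cocones for each of its finite subdiagrams; since there are fewer than $\kappa$ such subdiagrams and $\kappa$ is regular and uncountable, iterating $\omega$ times produces a $\kappa$-small filtered full subcategory containing it, which is what makes $S$ both exhaustive and $\kappa$-filtered.

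Combining these, a Fubini-type manipulation for a colimit taken over a colimit of index categories gives
\[X \cong \colim_{i \in I} c_i \cong \colim_{J \in S} \Bigl(\colim_{i \in J} c_i\Bigr) =: \colim_{J \in S} X_J,\]
a $\kappa$-filtered colimit in which each $X_J$ is, by construction, a $\kappa$-small filtered colimit of compact objects. Since $X$ is $\kappa$-compact, the canonical map $\colim_{J}\operatorname{Map}(X, X_J) \to \operatorname{Map}(X, X)$ is an equivalence, so the class of $\id_X$ lifts to a map $X \to X_{J_0}$ for some $J_0 \in S$; composing with the structure map $X_{J_0} \to X$ exhibits $X$ as a retract of the $\kappa$-small filtered colimit $X_{J_0}$ of compact objects.

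The main obstacle is the final upgrade from ``retract of'' to ``is'': I must show that the class of $\kappa$-small filtered colimits of compact objects is closed under retracts. Since $\mathscr{C}$ is presentable its idempotents split, so $X$ is the splitting of an idempotent $e$ on $X_{J_0}$, realized as the colimit of the sequential (hence $\kappa$-small, as $\kappa > \omega$) filtered telescope $X_{J_0} \xrightarrow{e} X_{J_0} \xrightarrow{e} \cdots$. The delicate point is that this telescope must be reindexed coherently into a single $\kappa$-small filtered diagram valued in $\mathscr{C}^{\omega}$, using that each compact $c_i$ sees only a bounded stage of the maps involved; regularity of $\kappa$ and $\kappa > \omega$ guarantee that the resulting total index category is again $\kappa$-small and filtered. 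I expect this reindexing (equivalently, the idempotent-completeness of the subcategory of $\kappa$-small filtered colimits of compacts) to be the technical heart of the argument, exactly as in the concluding paragraph of the proof of \cite{HTT} Theorem 5.5.1.1.
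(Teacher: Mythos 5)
Your proposal is correct and follows essentially the same route as the paper: both reduce to $\kappa > \omega$, exhibit the $\kappa$-compact object $X$ as a retract of a $\kappa$-small filtered colimit of compact objects (your saturation lemma is exactly the mechanism behind the proof of \cite{HTT} Corollary 4.2.3.11, which the paper invokes), and then remove the retract by splitting the idempotent along a sequential telescope. The only real difference is that the paper carries out the final reindexing explicitly --- forming the lax sequential colimit $J = \laxcolim I^{[n]}$ and using cofinality of the diagonal functors $I \to I^{[n]}$ to identify $\colim_{(i_0 \to \cdots \to i_n) \in J} y_{i_n}$ with $X$ --- whereas you defer this step to the concluding paragraph of the proof of \cite{HTT} Theorem 5.5.1.1, which is precisely the argument the paper reproduces.
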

\begin{proof}
  The statement is trivial for $\kappa = \omega$, so assume $\kappa > \omega$.
  Let $X$ be any $\kappa$-compact object of $\mathscr{C}$.
  Using the proof of \cite{HTT} Corollary 4.2.3.11, we can write $X$ as a $\kappa$-filtered
  colimit of $\kappa$-small colimits of compact objects.
  So, we see $X$ is the retract of a $\kappa$-small colimit of compact objects.
  We can then use the proof of \cite{HTT} Corollary 4.2.3.11 to rewrite $X$ as the retract of a $\kappa$-small filtered
  colimit of compact objects, as compact objects are closed under finite colimits.

  Write $X$ as a retract of an idempotent on $Y := \colim_{i \in I} y_i$, where $y_i$ are compact
  and $I$ is a $\kappa$-small filtered category.
  As in the proof of \cite{HTT} Proposition 4.4.5.20, we can write $X$ as a sequential colimit
  along the idempotent endomorphism of $Y$. Let $J$ be the lax sequential colimit
  \[J := \laxcolim{I^{[n]}}\]
  where the transition functor $I^{[n]} \to I^{[n+1]}$ is induced by the map $[n+1] \to [n]$ which sends $x$
  to $\max(x,n)$. We can compute
  \[\colim_{(i_0 \to \ldots \to i_n) \in J} y_{i_n} \cong \colim_{n}\colim_{(i_0 \to \ldots \to i_n) \in I^{[n]}}y_{i_n} \cong \colim_n\colim_{i \in I}y_i \cong X\]
  because the diagonal functor $I \to I^{[n]}$ is cofinal by \cite{HTT} Proposition 5.3.1.22.
  And we are done because $J$ is a $\kappa$-small filtered category.
  
\end{proof}

\begin{definition}
  We say an object $x$ in an abelian category $\mathscr{A}$ is $\kappa$-Noetherian if every filtered subset
  of $\mathrm{Sub}_{\mathscr{A}}(x)$ (the poset of subobjects of $x$ in $\mathscr{A}$)
  has a cofinal subset of cardinality $< \kappa$.
  We say an abelian category $\mathscr{A}$ is $\kappa$-Noetherian if every object in $\mathscr{A}$ is.
  If $x^{\operatorname{op}}$ is $\kappa$-Noetherian in $\opposite{\mathscr{A}}$, we say that $x$ is $\kappa$-Artinian in $\mathscr{A}$.
  $\opposite{\mathscr{A}}$ is called $\kappa$-Artinian if $\mathscr{A}$ is $\kappa$-Noetherian.
  Note that our indexing is off-by-one from \cite{grusonjensen}.
\end{definition}

\begin{lemma} \label{Noetheriansubobject}
  Let $\mathscr{A}$ be an abelian category and $\kappa$ be a regular cardinal. Then for any $x \in \mathscr{A}$, $x$ is $\kappa$-Noetherian in $\mathscr{A}$
  if and only if every subobject of $x$ in $\Ind{\mathscr{A}}$ is $\kappa$-compact.
\end{lemma}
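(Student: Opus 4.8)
The plan is to translate both conditions into statements about the poset $P := \mathrm{Sub}_{\mathscr{A}}(x)$ of subobjects of $x$ in $\mathscr{A}$ and its Ind-completion. By Lemma~\ref{basicabelianlemma} we have $\mathrm{Sub}_{\Ind{\mathscr{A}}}(x) \cong \Ind{P}$, and by Lemma~\ref{posetlemma} the latter is identified with the poset of ideals of $P$ ordered by inclusion; it is compactly generated, and its compact objects are exactly the principal ideals $\downarrow\! p$ for $p \in P$ (a poset is idempotent complete and has no nontrivial retracts, so every compact object is representable). Accordingly I read ``every subobject of $x$ in $\Ind{\mathscr{A}}$ is $\kappa$-compact'' as the assertion that every ideal of $P$ is a $\kappa$-compact object of $\mathrm{Sub}_{\Ind{\mathscr{A}}}(x) \cong \Ind{P}$, where filtered colimits are computed as in $\Ind{\mathscr{A}}$ (as in the proof of Lemma~\ref{basicabelianlemma}). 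The first step is a purely order-theoretic reduction: $x$ is $\kappa$-Noetherian if and only if every ideal of $P$ admits a cofinal subset of cardinality $< \kappa$. One direction is immediate, since an ideal is in particular a filtered subset; for the converse, given a directed subset $D$ one applies the hypothesis to the ideal $\downarrow\! D$ and transports a small cofinal subset $T$ of $\downarrow\! D$ back into $D$ by choosing, for each $t \in T$, an element $d_t \in D$ with $t \le d_t$, which yields a cofinal subset $\{d_t\}$ of $D$ of cardinality $< \kappa$.

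For the direction ``$\kappa$-compact $\Rightarrow$ small cofinal subset'', I would invoke Proposition~\ref{kappacompactlemma}: since $\Ind{P}$ is compactly generated, a $\kappa$-compact ideal $I$ can be written as a $\kappa$-small filtered colimit $\colim_{j \in J} \downarrow\! p_j$ of principal ideals. Colimits of a filtered diagram in the poset of ideals are unions, so $I = \downarrow\!\{p_j : j \in J\}$, and $\{p_j : j \in J\}$ is then a cofinal subset of $I$ of cardinality $\le |J| < \kappa$.

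For the converse, suppose every ideal $I$ has a cofinal subset $S$ with $|S| < \kappa$. I would first replace $S$ by a directed subset $S' \subseteq I$ that is still cofinal and of cardinality $< \kappa$, obtained by iteratively adjoining to $S$ an upper bound in $I$ for each of its finite subsets (such bounds exist because $I$ is directed; that the size stays below $\kappa$ uses the regularity of $\kappa$, and the case $\kappa = \omega$ is the degenerate one in which $I$ is forced to be principal, hence compact). Then $I = \colim_{s \in S'} \downarrow\! s$ exhibits $I$ as a $\kappa$-small filtered colimit of compact objects; since compact objects are $\kappa$-compact and $\kappa$-compact objects are closed under $\kappa$-small colimits (as $\kappa$-small limits commute with $\kappa$-filtered colimits of anima), $I$ is $\kappa$-compact. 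Combined with the reduction of the first paragraph, this gives both implications of the lemma.

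The main obstacle I anticipate is the bookkeeping at the interface between the order-theoretic language of the definition (``cofinal subset of cardinality $< \kappa$'') and the categorical language of Proposition~\ref{kappacompactlemma} (``$\kappa$-small filtered colimit of compacts''): one must correctly identify filtered colimits in $\Ind{P}$ with unions of ideals, and upgrade a mere cofinal subset to a genuinely $\kappa$-small \emph{filtered} indexing category without inflating its cardinality past $\kappa$. A secondary subtlety, worth flagging explicitly, is fixing the meaning of $\kappa$-compactness inside the subobject poset $\mathrm{Sub}_{\Ind{\mathscr{A}}}(x)$ rather than in the ambient category $\Ind{\mathscr{A}}$, so that Proposition~\ref{kappacompactlemma} applies directly to the compactly generated category $\Ind{P}$.
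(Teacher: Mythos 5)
Your translation into the poset $P = \mathrm{Sub}_{\mathscr{A}}(x)$ and its ideal completion is the same move the paper makes (via Lemmas~\ref{basicabelianlemma} and~\ref{posetlemma}), your order-theoretic reduction from filtered subsets to ideals is correct (in fact the directedness-upgrade step is redundant: a cofinal subset of a directed poset is automatically directed), and your forward direction is fine. The genuine gap is where you ``read'' the hypothesis as $\kappa$-compactness in the poset $\mathrm{Sub}_{\Ind{\mathscr{A}}}(x) \cong \Ind{P}$. The lemma as stated---and, more importantly, as used later---is about $\kappa$-compactness in the ambient category $\Ind{\mathscr{A}}$: Corollary~\ref{kappanoetherianext} combines the lemma with closure of $\kappa$-compact objects of $\Ind{\mathscr{A}}$ under extensions (a statement with no meaning inside a fixed subobject poset), and Proposition~\ref{lambdabound2} together with Lemma~\ref{Extvanishingonsmallobjects} needs the subquotients $X_{\alpha+1}/X_{\alpha}$ to be $\aleph_k$-compact in $\Ind{\mathscr{C}^{\heart}}$. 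So what you prove is equivalent to the lemma but not the lemma itself, and the equivalence is precisely the content you defer.

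Concretely, the direction that breaks as written is ``every subobject is $\kappa$-compact $\Rightarrow$ $x$ is $\kappa$-Noetherian'': you invoke Proposition~\ref{kappacompactlemma} for $\Ind{P}$, which requires the ideal to be $\kappa$-compact \emph{as an object of} $\Ind{P}$, whereas the hypothesis only gives $\kappa$-compactness of the corresponding subobject $y$ in $\Ind{\mathscr{A}}$. The bridge is short but must be supplied: if $y \le \colim_{i} z_i$ with $(z_i)$ a $\kappa$-filtered diagram of subobjects (the colimit being computed in $\Ind{\mathscr{A}}$, by the proof of Lemma~\ref{basicabelianlemma}), then $\kappa$-compactness of $y$ in $\Ind{\mathscr{A}}$ lets the monomorphism $y \to \colim_i z_i$ factor through some $z_{i_0}$, and the factoring map is automatically a subobject inclusion since its composite with $z_{i_0} \hookrightarrow x$ is the mono $y \hookrightarrow x$; hence $y \le z_{i_0}$ and $y$ is $\kappa$-compact in $\Ind{P}$. (Alternatively, run Proposition~\ref{kappacompactlemma} in $\Ind{\mathscr{A}}$ itself and replace the resulting compact objects by their images in $x$, exactly as in the proof of Lemma~\ref{basicabelianlemma}.) Symmetrically, in the forward direction you should note that your presentation of $y$ as a $\kappa$-small filtered colimit of objects of $\mathscr{A}$ is a colimit in $\Ind{\mathscr{A}}$, so it witnesses $\kappa$-compactness there, which is what is claimed. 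With these additions your argument is correct; your backward route through Proposition~\ref{kappacompactlemma} applied to a poset (where idempotents split trivially, so compacts are exactly the principal ideals) is then a clean repackaging of the paper's own argument, which instead extracts a $\kappa$-small $J \subseteq I$ with a section of $\colim_{j \in J} j \to \colim_{i \in I} i$, enlarges $J$ to a filtered $J^{(\infty)}$, and uses that a split monomorphism of subobjects is an isomorphism.
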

\begin{proof}
  First, we note that filtered colimits in $\mathrm{Sub}_{\Ind{\mathscr{A}}}(x)$ can be computed in $\Ind{\mathscr{A}}$
  by the proof of Lemma \ref{basicabelianlemma}.
  
  Suppose $x$ is $\kappa$-Noetherian in $\mathscr{A}$, and $y$ is a subobject of $x$ in $\Ind{\mathscr{A}}$.
  Because $\mathrm{Sub}_{\Ind{\mathscr{A}}}(x) \cong \Ind{\mathrm{Sub}_{\mathscr{A}}(x)}$,
  we have $y \cong \colim_{i \in I}i$ where $I$ is the ideal of subobjects of $x$ in $\mathscr{A}$ which
  are also subobjects of $y$ (here the colimit can be taken in $\mathrm{Sub}_{\Ind{\mathscr{A}}}(x)$ or $\Ind{\mathscr{A}}$).
  The existence of a $\kappa$-small cofinal subset of $I$ implies $y$ is $\kappa$-compact.
  
  Now assume every subobject of $x$ in $\Ind{\mathscr{A}}$ is $\kappa$-compact and let $I \subseteq \mathrm{Sub}_{\mathscr{A}}(x)$ be a filtered subset.
  Then $\colim_{i \in I}i$ is $\kappa$-compact in $\Ind{\mathscr{A}}$ and therefore the map $\colim_{j \in J}j \to \colim_{i \in I}$
  admits a section for some $\kappa$-small subset $J \subseteq I$ by the proof of \cite{HTT} Corollary 4.2.3.11.
  We can enlarge $J$ into a filtered subset as follows.
  Let $J^{(0)}:=J$. For $n > 0$, define $J^{(n)}$ to be any $\kappa$-small
  subset of $I$ such that for every
  finite subset $K \subseteq J^{(n-1)}$, there is an element in $J^{(n)}$ which is larger or equal to all elements of $K$.
  Let $J^{(\infty)}$ be the union of all the $J^{(n)}$'s, then it is then a $\kappa$-small filtered subset of $I$ such that
  the map $\colim_{j \in J^{(\infty)}}j \to \colim_{i \in I}i$ has a section. Therefore, as this is an inclusion of subobjects,
  it is an isomorphism. Therefore, $J^{(\infty)}$ is cofinal in $I$ and $x$ is $\kappa$-Noetherian.
\end{proof}

\begin{corollary} \label{kappanoetherianext}
  Let $\mathscr{A}$ be an abelian category and $\kappa$ be a regular cardinal. The collection of $\kappa$-noetherian objects
  is a thick subcategory of $\mathscr{A}$.
\end{corollary}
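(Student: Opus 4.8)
The plan is to verify the three defining closure properties of a thick (Serre) subcategory---closure under subobjects, quotients, and extensions---using the reformulation of the $\kappa$-Noetherian condition provided by Lemma \ref{Noetheriansubobject}: an object $x \in \mathscr{A}$ is $\kappa$-Noetherian precisely when every subobject of $x$ in the locally finitely presentable abelian category $\Ind{\mathscr{A}}$ (in which $\kappa$-filtered colimits are exact) is $\kappa$-compact. Throughout I would freely use the standard fact that $\kappa$-compact objects of $\Ind{\mathscr{A}}$ are closed under $\kappa$-small colimits, in particular under finite colimits such as cokernels and pushouts, since finite diagrams are $\kappa$-small for every regular $\kappa$.

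The subobject case is immediate: if $x' \hookrightarrow x$ in $\mathscr{A}$ and $x$ is $\kappa$-Noetherian, then any subobject of $x'$ in $\Ind{\mathscr{A}}$ is in particular a subobject of $x$, hence $\kappa$-compact. For the quotient case, let $q : x \twoheadrightarrow x''$ be an epimorphism with $x$ $\kappa$-Noetherian and set $k = \ker(q) \in \mathscr{A}$. Given a subobject $z \hookrightarrow x''$ in $\Ind{\mathscr{A}}$, I would pull it back to $\tilde z = z \times_{x''} x \hookrightarrow x$, which is $\kappa$-compact by hypothesis; since pullbacks of epimorphisms are epimorphisms in an abelian category, there is a short exact sequence $0 \to k \to \tilde z \to z \to 0$, exhibiting $z$ as the cokernel of a map of $\kappa$-compact objects (note $k \in \mathscr{A}$ is compact). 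Hence $z$ is $\kappa$-compact and $x''$ is $\kappa$-Noetherian.

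The extension case is the heart of the argument and is where I expect the real work to lie. Given a short exact sequence $0 \to x' \to x \to x'' \to 0$ in $\mathscr{A}$ with $x'$ and $x''$ both $\kappa$-Noetherian, and a subobject $y \hookrightarrow x$ in $\Ind{\mathscr{A}}$, intersecting with $x'$ yields a short exact sequence $0 \to y \cap x' \to y \to \operatorname{im}(y \to x'') \to 0$ whose outer terms are subobjects of $x'$ and $x''$ respectively, hence $\kappa$-compact by hypothesis. It therefore suffices to prove the general claim that $\kappa$-compact objects of a Grothendieck abelian category are closed under extensions. I would prove this by comparing, along a $\kappa$-filtered diagram $(c_j)$ with colimit $c$, the two $\Hom$--$\Ext^1$ long exact sequences attached to $0 \to a \to y \to b \to 0$ with $a, b$ $\kappa$-compact: applying the exact functor $\colim_j$ to the sequence of the $c_j$ and mapping it to the sequence of $c$, the comparison maps $\colim_j \Hom(a, c_j) \to \Hom(a, c)$ and $\colim_j \Hom(b, c_j) \to \Hom(b, c)$ are isomorphisms, so by the five lemma it is enough to show that
\[\colim_j \Ext^1(b, c_j) \to \Ext^1(b, c)\]
is a monomorphism.

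This monomorphism is the one nontrivial point, and I would establish it directly from the $\kappa$-compactness of $b$: a class in the source is represented by an extension $0 \to c_{j_0} \to E \to b \to 0$, and its vanishing in $\Ext^1(b, c)$ means that the pushout $0 \to c \to E \sqcup_{c_{j_0}} c \to b \to 0$ splits. Since pushouts commute with colimits, $E \sqcup_{c_{j_0}} c \cong \colim_{j \ge j_0}(E \sqcup_{c_{j_0}} c_j)$ is a $\kappa$-filtered colimit, so the splitting $b \to E \sqcup_{c_{j_0}} c$ factors through some $E \sqcup_{c_{j_0}} c_j$, where one checks it splits the $j$-th pushed-forward extension; hence the class already dies at stage $j$. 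Granting this, the five lemma shows that $\Hom(y, -)$ commutes with $\kappa$-filtered colimits, so $y$ is $\kappa$-compact and $x$ is $\kappa$-Noetherian. The main obstacle is thus entirely isolated in this $\Ext^1$ comparison; the rest is formal manipulation of the subobject reformulation of Lemma \ref{Noetheriansubobject} together with the colimit-closure properties of $\kappa$-compact objects.
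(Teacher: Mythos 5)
Your proof is correct, and although it opens with the same reduction as the paper, it settles the key closure-under-extensions claim by a genuinely different and more elementary route. Like the paper, you use Lemma \ref{Noetheriansubobject} to reduce the extension case to the claim that $\kappa$-compact objects of $\Ind{\mathscr{A}}$ are closed under extensions (for subobjects and quotients the paper is even more economical than you: it simply embeds the posets $\mathrm{Sub}_{\mathscr{A}}(y)$ and $\mathrm{Sub}_{\mathscr{A}}(x/y)$ into $\mathrm{Sub}_{\mathscr{A}}(x)$, with no pullback or cokernel manipulations in $\Ind{\mathscr{A}}$). At that point the paper leaves the abelian world: it identifies $\Ind{\mathscr{A}}$ with the heart of the stable category $\Ind{D^b(\mathscr{A})}$, where the middle term of an extension is a finite colimit of shifts of the outer terms, and invokes closure of $\kappa$-compact objects of a stable category under finite colimits and shifts, together with the comparison (via Proposition \ref{kappacompactlemma}) that $\kappa$-compact objects of $\Ind{\mathscr{A}}$ remain $\kappa$-compact in $\Ind{D^b(\mathscr{A})}$. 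You instead stay inside $1$-categorical homological algebra: the four/five lemma applied to the $\Hom$--$\Ext^1$ exact sequences reduces everything to injectivity of $\colim_j \Ext^1(b,c_j) \to \Ext^1(b,c)$, and your argument for that injectivity is sound: represent the class at a stage $j_0$, note that its vanishing in $\Ext^1(b,c)$ yields a section $s$ of $E \sqcup_{c_{j_0}} c \to b$, factor $s$ through some $E \sqcup_{c_{j_0}} c_j$ using $\kappa$-compactness of $b$ (the coslice $\{j \ge j_0\}$ is still $\kappa$-filtered and cofinal), and then $p_j \circ s_j = p \circ \iota_j \circ s_j = p \circ s = \id_b$, so the class already dies at stage $j$. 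Your route buys self-containedness---no stable categories, no $\Ind{D^b(\mathscr{A})}$, and no compactness comparison across an embedding---and it isolates exactly the fragment of ``$\Ext^1(b,-)$ commutes with $\kappa$-filtered colimits'' that holds for a merely $\kappa$-compact $b$, namely injectivity of the comparison map; the paper's route buys brevity, since in the stable setting shifts convert extensions into finite colimits so that Ext groups never need to be mentioned, and it reuses machinery already set up earlier in the paper.
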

\begin{proof}
  If $x \in \mathscr{A}$ and $y$ is a subobject of $x$, then $\mathrm{Sub}_{\mathscr{A}}(y)$ and $\mathrm{Sub}_{\mathscr{A}}(x/y)$
  embed in $\mathrm{Sub}_{\mathscr{A}}(x)$, so the collection of $\kappa$-Noetherian objects is closed under subobjects
  and quotients by definition. Now suppose \[0 \to a \to b \to c \to 0\] is a short exact sequence in $\mathscr{A}$, with $a$ and $c$
  being $\kappa$-Noetherian. Any subobject of $b$ is an extension of a subobject of $c$ by a subobject of $a$, so it suffices
  to show that $\kappa$-compact objects in $\Ind{\mathscr{A}}$ is closed under extensions by Lemma \ref{Noetheriansubobject}.
  As $\Ind{\mathscr{A}} \cong \Ind{D^b(\mathscr{A})}^{\heart}$ and $\Ind{D^b(\mathscr{A})}$ is a Grothendieck abelian category (\cite{SAG} Example C.1.4.4),
  this follows from the fact that $\kappa$-compact objects in a stable category are closed under finite colimits and shifts (as shifts are isomorphisms
  in a stable category)--together with the fact that $\kappa$-compact objects in $\Ind{\mathscr{A}}$ are $\kappa$-compact
  in $\Ind{D^b(\mathscr{A})}$ (as objects in $A$ are compact in $\Ind{D^b(\mathscr{A})}$ and $\kappa$-compact objects in $\Ind{\mathscr{A}}$
  are $\kappa$-small filtered colimits of objects in $A$).
\end{proof}

The following is a mild generalization of \cite{grusonjensen} Corollary 6.5.
\begin{proposition} \label{lambdabound2}
  Let $\mathscr{C}$ be a small idempotent-complete stable category with a right-bounded t-structure such that $\mathscr{C}^{\heart}$ is
  $\aleph_{k}$-Noetherian, then $\mathscr{C}$ has $\lambda$-dimension at most $k$.
\end{proposition}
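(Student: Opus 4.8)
The plan is to work inside the Grothendieck abelian category $\mathscr{A} := \Ind{\mathscr{C}}^{\heart} \cong \Ind{\mathscr{C}^{\heart}}$, whose generating set is the image of $\mathscr{C}^{\heart}$, and to show that any t-exact $F : \opposite{\mathscr{C}} \to \Sp$, viewed as a coconnective object of $\Ind{\mathscr{C}}$, has injective dimension at most $k$. For $x \in \mathscr{C}^{\heart}$ the evaluation $\Hom(x,F) \cong F(x)$ is discrete, so nothing happens on the generators themselves; the content is on their subobjects. The first step is therefore a reduction: using that $\mathscr{A}$ is a Grothendieck category with a generating set and that $\Hom(x,F) \in \Sp^{[0,0]}$ for $x \in \mathscr{C}^{\heart}$, I would invoke the Baer criterion applied to the $k$-th cosyzygy of $F$ to reduce the claim $\Hom(X,F)\in\Sp^{[0,k]}$ for all $X\in\mathscr{A}$ to the single condition $\pi_{-(k+1)}\Hom(x/x',F) = 0$ for every $x \in \mathscr{C}^{\heart}$ and every subobject $x' \hookrightarrow x$ in $\mathscr{A}$.

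Next I would compute $\Hom(x/x',F)$ as a derived limit. By Lemma \ref{basicabelianlemma} the subobject $x' \hookrightarrow x$ is the filtered colimit $x' \cong \colim_{i \in I} x'_i$ over the ideal $I \subseteq \mathrm{Sub}_{\mathscr{C}^{\heart}}(x)$ of honest subobjects of $x$ that it contains, and the hypothesis that $\mathscr{C}^{\heart}$ is $\aleph_k$-Noetherian provides a cofinal subset $I' \subseteq I$ with $|I'| < \aleph_k$. Exactness of filtered colimits in $\mathscr{A}$ then gives $x/x' \cong \colim_{i \in I'} (x/x'_i)$ with every $x/x'_i \in \mathscr{C}^{\heart}$, whence $\Hom(x/x',F) \cong \lim_{i \in \opposite{I'}} F(x/x'_i)$ is an inverse limit of \emph{discrete} spectra indexed by a poset of cardinality $< \aleph_k$. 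Consequently its homotopy groups are exactly the higher derived limits: $\pi_{-j}\Hom(x/x',F) \cong {\lim}^{j}_{\opposite{I'}} F(x/x'_i)$.

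Finally, the cardinality bound is fed into the Goblot--Mitchell theorem on the cohomological dimension of directed posets: a directed poset of cardinality $\le \aleph_{k-1}$ (which is exactly the constraint $|I'| < \aleph_k$, and forces a maximal element when $k=0$) has cohomological dimension at most $k$, so ${\lim}^{j}$ vanishes for $j > k$ and in particular $\pi_{-(k+1)}\Hom(x/x',F) = {\lim}^{k+1}_{\opposite{I'}} F(x/x'_i) = 0$. Together with the reduction of the first step this gives injective dimension at most $k$, i.e. $\lambda$-dimension at most $k$. I expect the main friction to be bookkeeping the off-by-one in the indexing---reconciling ``cofinal subset of cardinality $< \aleph_k$'' in the definition of $\aleph_k$-Noetherian with the ``$\le \aleph_{k-1} \Rightarrow \mathrm{cd} \le k$'' form of the Mitchell bound, and sanity-checking the $k=0$ base case (classically Noetherian heart, where this recovers ``fp-injective $\Rightarrow$ injective'')---together with carefully justifying that the homotopy groups of an inverse limit of discrete spectra compute the derived limits and that the Baer-type reduction is legitimate in this prestable setting.
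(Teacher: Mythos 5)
Your second and third steps are correct, and they are essentially the paper's own computational core (Lemma \ref{basicabelianlemma}, Lemma \ref{Noetheriansubobject}, Lemma \ref{Extvanishingonsmallobjects}): writing $x/x'$ as an $\aleph_k$-small filtered colimit of objects $x/x'_i \in \mathscr{C}^{\heart}$, on which the t-exact functor $F$ takes discrete values, and then quoting the Goblot--Mitchell bound (the paper cites \cite{SAG} Lemma D.3.3.6); your route is in fact slightly more direct than the paper's detour through $\aleph_k$-compact objects and Proposition \ref{kappacompactlemma}. The genuine gap is in your first step, and it is more than the ``friction'' you anticipated. The reduction to the \emph{single} condition $\pi_{-(k+1)}\Hom(x/x',F)=0$ rests on the abelian-category syllogism ``$\Ext^1$ into the $k$-th cosyzygy vanishes on subquotients of generators $\Rightarrow$ the cosyzygy is injective $\Rightarrow$ all higher $\Ext$ into it vanish'', and the self-improvement from degree $1$ to all degrees fails in $\Ind{\mathscr{C}}$, which is in general not the derived category of its heart: for $j\ge 2$ the groups $\pi_{-j}\Hom_{\Ind{\mathscr{C}}}(X,Q)$ between heart objects are not Yoneda Ext groups, and vanishing of $\Ext^1$ --- even against \emph{every} object of the heart --- does not propagate upward. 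Concretely, take $\mathscr{C}=\Coh{R}$ for the trivial square-zero extension $R=\mathbb{Z}\oplus\Sigma\mathbb{Z}$ (a connective Noetherian $\mathbb{E}_{\infty}$-ring, so this lies squarely in the paper's setting, with $\mathscr{C}^{\heart}$ the finitely generated abelian groups), and let $Q=\mathbb{Q}/\mathbb{Z}\in\Ind{\mathscr{C}}^{\heart}$. Since $\pi_0\Hom(X,\Sigma Q)$ classifies extensions in the heart and $\mathbb{Q}/\mathbb{Z}$ is injective there, $\pi_{-1}\Hom_{\Ind{\mathscr{C}}}(X,Q)=0$ for all $X\in\Ind{\mathscr{C}}^{\heart}$; yet $\pi_{-2}\Hom_{\Ind{\mathscr{C}}}(\mathbb{Z},Q)\cong\colim_n\pi_{-2}\Hom_{R}(\mathbb{Z},\mathbb{Z}/n)\cong\mathbb{Q}/\mathbb{Z}\neq 0$, because $\pi_2(\mathbb{Z}\otimes_R\mathbb{Z})\cong\mathbb{Z}$. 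So in this setting ``receives no $\Ext^1$ from the heart'' does not mean ``injective'', and injective dimension cannot be tested in a single degree. (The same example shows why the proposition is restricted to t-exact $F$: the functor corresponding to this $Q$ is not t-exact.)

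The repair is to strengthen the test condition to all degrees $j>k$ --- which costs you nothing, since your Goblot--Mitchell computation already gives $\lim^j=0$ for every $j>k$ --- and then to prove the reduction degree by degree. That reduction is not available as a citable Baer criterion here; it must be proved, and its proof is precisely the transfinite, Auslander-style induction that constitutes the paper's argument (modeled on \cite{SAG} Proposition C.6.10.1): filter an arbitrary $X\in\Ind{\mathscr{C}}^{\heart}$ by a transfinite chain of subobjects $X_\alpha$ whose successive quotients admit surjections from objects of $\mathscr{C}^{\heart}$ (hence are of your form $x/x'$), and extend a nullhomotopy of a given map $X\to\Sigma^jF$ across the filtration, the obstruction at each successor stage being exactly the connectivity of $\Hom(X_{\alpha+1}/X_\alpha,\Sigma^jF)$. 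In other words, your step 1 is not an off-the-shelf lemma but the actual content of the proof; once it is replaced by this transfinite argument, run separately for each $j>k$, your steps 2 and 3 complete the proposition exactly as in the paper.
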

\begin{proof}
  Fix a t-exact functor $F : \opposite{\mathscr{C}} \to \Sp$ of stable categories. For any morphism $f: X \to \Sigma^{k+1}F$
  in $\Ind{\mathscr{C}}$ where $X \in \Ind{\mathscr{C}}^{\heart}$,
  we will construct a nullhomotopy of $f$ following the proof strategy of \cite{SAG} Proposition C.6.10.1\footnote{This type of argument goes back to Auslander, see \cite{auslander}}.

  Inductively define a (large) chain of subobjects indexed by ordinal numbers $\{X_{\alpha} \subseteq X\}$ in $\Ind{\mathscr{C}}^{\heart}$ such that
  \begin{itemize}
  \item $X_0 \cong 0$
  \item $X_{\alpha+1}/X_{\alpha}$ admits a surjection from an element of $\mathscr{C}^{\heart}$ and is nonzero if $X_{\alpha} \neq X$
  \item $X_{\alpha} = \cup_{\beta < \alpha}{X_{\beta}}$ if $\alpha$ is a limit ordinal
  \end{itemize}
  This is possible because $\Ind{\mathscr{C}}^{\heart} \cong \Ind{\mathscr{C}^{\heart}}$ is generated by $\mathscr{C}^{\heart}$.
  Note that $X_{\alpha}=X$ for $\alpha \gg 0$.

  We will build the nullhomotopy in $\Hom(X,\Sigma^{k+1}F) \cong \lim_{\alpha}\Hom(X_{\alpha},\Sigma^{k+1}F)$ by transfinite induction.
  It suffices to show that for any successor ordinal $\alpha+1$ and a null homotopy of $f_{\alpha} := f|_{X_{\alpha}}$ in
  $\Hom(X_{\alpha},\Sigma^{k+1}F)$, we can find a compatible nullhomotopy of $f_{\alpha+1}$ in $\Hom(X_{\alpha+1},\Sigma^{k+1}F)$--this will ensure
  we can continue to extend the nullhomotopy at any successor ordinal and the extension of the nullhomotopy to limit ordinals is uniquely defined.

  The map $f_{\alpha+1} : X_{\alpha+1} \to \Sigma^{k+1}F$ together with the nullhomotopy of $f$ restricted to $X_{\alpha}$ defines a point in
  the space
  \[\Hom(X_{\alpha+1},\Sigma^{k+1}F) \times_{\Hom(X_{\alpha},\Sigma^{k+1}F)} 0\]
  So it suffices to show this space is connected. But this space is isomorphic to
  \[\Hom(X_{\alpha}/X_{\alpha+1},\Sigma^{k+1}F)\]
  $X_{\alpha+1}/X_{\alpha}$ is $\kappa$-compact in $\Ind{\mathscr{C}}^{\heart}$ by Lemma \ref{Noetheriansubobject}, so we finish from Lemma
  \ref{Extvanishingonsmallobjects} below.

\end{proof}

\begin{lemma} \label{Extvanishingonsmallobjects}
  Let $\mathscr{C}$ be a small idempotent-complete stable category with a right-bounded t-structure and
  $F : \opposite{\mathscr{C}} \to \Sp$ be a t-exact functor of stable categories.
  For any $\aleph_k$-compact object $X \in \Ind{\mathscr{C}}^{\heart}$, 
  \[\Hom_{\Ind{\mathscr{C}}}(X,F) \in \Sp^{[0,k]}\]
\end{lemma}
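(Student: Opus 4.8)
The plan is to bound the top-degree cohomology of $\Hom_{\Ind{\mathscr{C}}}(X,F)$ by induction on the ``complexity'' of the $\aleph_k$-compact object $X$, using Proposition \ref{kappacompactlemma} to reduce to a more manageable situation. First I would observe that since $F$ is t-exact, $\Hom(X,F)$ is automatically coconnective for $X \in \Ind{\mathscr{C}}^{\heart}$, so the only content is the vanishing $\pi_{-j}\Hom(X,F) = 0$ for $j > k$ (equivalently $\Hom(X,F) \in \Sp^{\le k}$). The base case $k = 0$ is where $X$ is compact, i.e. $X \in \mathscr{C}^{\heart}$: here $\Hom(X,F) \simeq F(X) \in \Sp^{\ge 0}$ by t-exactness of $F$, but one also needs $F(X)$ connective from the other side, which is exactly the hypothesis that $F$ is t-exact (it sends $\mathscr{C}^{\heart}$, viewed in $\opposite{\mathscr{C}}$, into $\Sp^{\le 0}$); so $\Hom(X,F) \in \Sp^{[0,0]}$.

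For the inductive step, I would use Proposition \ref{kappacompactlemma} to write an $\aleph_k$-compact $X$ (for $k \geq 1$) as an $\aleph_k$-small filtered colimit of compact objects, but more usefully I would use a transfinite filtration as in the proof of Proposition \ref{lambdabound2}: build a chain $\{X_\alpha\}$ of subobjects of $X$ in $\Ind{\mathscr{C}}^{\heart}$ with $X_0 = 0$, successive quotients $X_{\alpha+1}/X_\alpha$ receiving a surjection from $\mathscr{C}^{\heart}$, continuous at limits, with $X = X_\lambda$ for some $\lambda < \aleph_k$ (here $\aleph_k$-compactness guarantees $X$ is an $\aleph_k$-small filtered colimit of subobjects from $\mathscr{C}^{\heart}$, bounding the length of the filtration). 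Then $\Hom(X,F) \simeq \lim_\alpha \Hom(X_\alpha,F)$, and the cofibre sequences $X_\alpha \to X_{\alpha+1} \to X_{\alpha+1}/X_\alpha$ give fibre sequences of mapping spectra. The subobject $X_{\alpha+1}/X_\alpha$ is a quotient of an object of $\mathscr{C}^{\heart}$ by some subobject; using Lemma \ref{Noetheriansubobject}, that kernel is itself $\aleph_k$-compact, so each quotient is built from compact objects and a single $\aleph_k$-small filtered colimit, which I would argue contributes at most one extra degree of $\Ext$.

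The key quantitative input is that taking an $\aleph_k$-small filtered colimit in the source raises the coconnectivity bound by at most $1$, combined with a Mittag-Leffler/$\lim^1$-type analysis of the transfinite limit $\lim_\alpha$. Concretely, I would show $\Sp^{\le k}$-membership is preserved under the transfinite limit because the tower $\{\Hom(X_\alpha,F)\}$ has fibres in $\Sp^{\le k}$ at each successor stage, and the relevant $\lim^1$ contributions (which could a priori push into degree $k+1$) vanish by the minimality of the filtration and coconnectivity of $F$. The main obstacle I anticipate is precisely this control of the inverse limit: a transfinite limit of $\Sp^{\le k}$ objects is not automatically in $\Sp^{\le k}$, and one must carefully track how the $\aleph_k$-small cofinal structure (supplied by Lemma \ref{Noetheriansubobject}) interacts with the derived limit so that no cohomology is created in degree $k+1$. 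I would handle this by induction on $k$ rather than attempting a single limit argument: writing $X$ as an $\aleph_k$-small filtered colimit of $\aleph_{k-1}$-compact objects (the kernels and pieces being of strictly lower Noetherian complexity), the inductive hypothesis bounds each term in $\Sp^{[0,k-1]}$, and a single filtered-colimit-in-source step adds the final degree, landing in $\Sp^{[0,k]}$.
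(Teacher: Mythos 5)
Your opening reduction coincides with the paper's proof: coconnectivity of $\Hom(X,F)$ is automatic from t-exactness, and Proposition \ref{kappacompactlemma} writes $X$ as an $\aleph_k$-small filtered colimit of objects of $\mathscr{C}^{\heart}$, so that $\Hom_{\Ind{\mathscr{C}}}(X,F)$ becomes an $\aleph_k$-small cofiltered limit of objects $F(x_i) \in \Sp^{[0,0]}$. At exactly this point the paper invokes a nontrivial theorem (\cite{SAG} Lemma D.3.3.6, due to \cite{goblot} and \cite{mitchell}): $\aleph_k$-small cofiltered limits in $\Sp$ have cohomological dimension at most $k$. Your proposal replaces this input by the principle that ``a single $\aleph_k$-small filtered colimit in the source raises the coconnectivity bound by at most $1$,'' applied inductively in $k$, and that principle is false for $k \ge 2$. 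Indeed, the Goblot--Mitchell bound is sharp (this goes back to \cite{osofsky}): there exist cofiltered systems of discrete abelian groups indexed by a poset of cardinality $\aleph_{k-1}$ (hence $\aleph_k$-small) whose limit has nonvanishing $\lim^k$, i.e.\ cohomology in degree exactly $k$. Shifting such a system into cohomological degree $k-1$ produces an $\aleph_k$-small cofiltered system of objects of $\Sp^{[0,k-1]}$ whose limit has cohomology in degree $2k-1 > k$. So your inductive step fails as stated: one $\aleph_k$-small limit can raise coconnectivity by $k$, not by $1$, and even granting the cited theorem at each stage your induction would only yield a quadratic bound rather than $k$. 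The ``one degree per step'' structure can be rescued, but only by taking the limits along well-ordered chains that are \emph{continuous at limit ordinals} (one writes the $\aleph_k$-small filtered index category as a continuous increasing union of $\aleph_{k-1}$-small filtered subcategories, and uses that limits along continuous well-ordered towers have cohomological dimension at most one); this continuity and well-ordering is precisely what a general filtered decomposition into $\aleph_{k-1}$-compacts does not provide, and supplying it amounts to reproving Mitchell's theorem rather than avoiding it.

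Two further steps do not work as written. Your appeal to Lemma \ref{Noetheriansubobject} to conclude that the kernel of a surjection $c \twoheadrightarrow X_{\alpha+1}/X_{\alpha}$ with $c \in \mathscr{C}^{\heart}$ is $\aleph_k$-compact requires $c$ to be $\aleph_k$-Noetherian; that hypothesis is available in Proposition \ref{lambdabound2} (which is where the paper runs the transfinite filtration, quoting the present lemma as input), but it is not among the hypotheses of Lemma \ref{Extvanishingonsmallobjects}, which assumes nothing about $\mathscr{C}^{\heart}$. And the claim that the ``relevant $\lim^1$ contributions vanish by the minimality of the filtration and coconnectivity of $F$'' is an assertion, not an argument: the entire difficulty is that these derived limits do \emph{not} vanish in general; they are merely bounded, and bounding them is the content of the cited theorem. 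The efficient repair is to keep your first two steps and conclude directly with \cite{SAG} Lemma D.3.3.6, which is exactly the paper's proof.
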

\begin{proof}
  $F$ is coconnective because $\Hom(X,F) \in \Sp^{\ge 0}$ for all $X \in \mathscr{C}_{\ge 0}$, so it suffices to show
  \[\Hom_{\Ind{\mathscr{C}}}(X,F) \in \Sp^{\le k}\]
  for all $\aleph_k$-compact $X$ in $\Ind{\mathscr{C}}^{\heart}$.

  $X$ is a $\aleph_k$-small filtered colimit of compact objects in $\Ind{\mathscr{C}}^{\heart}$ (see Proposition \ref{kappacompactlemma}).
  The inclusion $\Ind{\mathscr{C}}^{\heart} \to \Ind{\mathscr{C}}$
  preserves filtered colimits, so the result follows from the fact that $\aleph_k$-small cofiltered limits in $\Sp$ have cohomological dimension at most $k$,
  see \cite{SAG} Lemma D.3.3.6.\footnote{This fact is originally due to \cite{goblot} and the argument given in \cite{SAG} is based on \cite{mitchell}, which was in turn inspired by \cite{osofsky}.}
\end{proof}

Let $R$ be a Noetherian $\mathbb{E}_{\infty}$-ring and $\Coh{R}$ be the stable category of bounded almost perfect $R$-modules.
By \cite{HA} Proposition 7.2.4.18, this category carries a t-structure such that the forgetful functor to $\Sp$ is t-exact.
We can define an increasing filtration on $\Coh{R}$ using dimension of support.
More precisely, let $\Coh{R}_{(k)}$ be the subcategory of modules whose support
(a closed subset of $\Spec{R}$) has Krull dimension at most $k$. The subquotients of this filtration are endowed with induced t-structures by
the Lemma \ref{tstructurelemma} below, since $\Coh{R}^{\heart}_{(k)}:= \Coh{R}_{(k)} \cap \Coh{R}^{\heart}$ is a Serre subcategory of $\Coh{R}^{\heart} \cong \Coh{\pi_0(R)}^{\heart}$
and an object in $\Coh{R}$ is in $\Coh{R}_{(k)}$ if and only if all its homotopy groups are in $\Coh{R}_{(k)}$.

\begin{lemma} \label{tstructurelemma}
  Let $\mathscr{C}$ be a small idempotent-complete stable category with a t-structure. A Serre subcategory $J \subseteq \mathscr{C}^{\heart}$
  defines an idempotent-complete stable subcategory of $\langle J \rangle \subseteq \mathscr{C}$ consisting of objects whose homotopy groups are in $J$.
  There is a unique t-structure on the Verdier quotient $\mathscr{C}/\langle J \rangle$ such that the quotient map is t-exact.
  The heart of this t-structure on $\mathscr{C}/\langle J \rangle$ is $\mathscr{C}^{\heart}/J$.
\end{lemma}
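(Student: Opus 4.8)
The plan is to treat the three assertions in turn: that $\langle J\rangle$ is an idempotent-complete stable subcategory closed under truncation, that the quotient carries a unique $t$-exact $t$-structure, and that its heart is the Serre quotient $\mathscr{C}^{\heart}/J$. For the first, I would use the long exact sequence of homotopy objects associated to a fiber sequence $X \to Y \to Z$: if $X, Y \in \langle J\rangle$, then each $\pi_n(Z)$ sits in an extension of a subobject of $\pi_{n-1}(X)$ by a quotient of $\pi_n(Y)$, hence lies in $J$ because $J$ is a Serre subcategory. Closure under shifts is immediate, and closure under retracts follows since a Serre subcategory is closed under direct summands; as $\mathscr{C}$ is idempotent-complete, so is $\langle J\rangle$. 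The decisive extra property is that $\langle J\rangle$ is closed under the truncations $\tau_{\ge n}$ and $\tau_{\le n}$, since the homotopy objects of a truncation form a subset of those of the original object.

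Next I would construct the $t$-structure by hand. Let $(\mathscr{C}/\langle J\rangle)_{\le 0}$ be the essential image of $\mathscr{C}_{\le 0}$ under the quotient functor $q$, and let $(\mathscr{C}/\langle J\rangle)_{\ge 1}$ be its left orthogonal, so that the connective part is automatically closed under the operations a $t$-structure requires. Shift-closure is clear, and applying $q$ to the truncation triangle $\tau_{\ge 1}Y \to Y \to \tau_{\le 0}Y$ of an arbitrary $Y \in \mathscr{C}$ yields, once orthogonality is established, the truncation triangle for $qY$; this simultaneously shows $q$ is $t$-exact. Uniqueness is then formal: a $t$-exact functor must commute with truncation, and since every object of the quotient is of the form $qY$, this determines the truncation functors and hence the $t$-structure.

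The main obstacle is the orthogonality axiom: for $A \in \mathscr{C}_{\ge 1}$ and $B \in \mathscr{C}_{\le 0}$ one must show $\Hom_{\mathscr{C}/\langle J\rangle}(qA, qB) = 0$. Using the filtered-colimit description of mapping spectra in a Verdier quotient, $\Hom_{\mathscr{C}/\langle J\rangle}(qA, qB) \cong \colim_{s}\Hom_{\mathscr{C}}(A', B)$ over maps $s : A' \to A$ with $\fib(s) \in \langle J\rangle$, it suffices to show each $\phi : A' \to B$ becomes null after a further refinement. Since $B$ is coconnective, $\phi$ factors through $\tau_{\le 0}A'$, and the key computation is that $\tau_{\le 0}A' \in \langle J\rangle$: from the fiber sequence $\fib(s) \to A' \to A$ and the vanishing $\pi_n(A) = 0$ for $n \le 0$, each $\pi_n(A')$ with $n \le 0$ is a subquotient of a homotopy object of $\fib(s) \in \langle J\rangle$, hence lies in $J$. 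Refining $s$ along the connective cover $\tau_{\ge 1}A' \to A'$, whose fiber $(\tau_{\le 0}A')[-1]$ now lies in $\langle J\rangle$, kills $\phi$, because $\phi$ factors through $\tau_{\le 0}A'$ while the composite $\tau_{\ge 1}A' \to A' \to \tau_{\le 0}A'$ is null. As the colimit is filtered, it vanishes.

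Finally, for the heart, $t$-exactness of $q$ induces an exact functor $\mathscr{C}^{\heart} \to (\mathscr{C}/\langle J\rangle)^{\heart}$ annihilating $J$, hence a functor $\mathscr{C}^{\heart}/J \to (\mathscr{C}/\langle J\rangle)^{\heart}$. It is essentially surjective since any heart object $qY$ satisfies $qY \cong H^0(qY) \cong q(H^0 Y)$ with $H^0 Y \in \mathscr{C}^{\heart}$. For full faithfulness I would compute, for $X, Y \in \mathscr{C}^{\heart}$, that $\Hom_{\mathscr{C}/\langle J\rangle}(qX, qY) \cong \colim_{s : A' \to X}\Hom_{\mathscr{C}^{\heart}}(H^0 A', Y)$, since maps into the coconnective $Y$ factor through $H^0 A'$, and observe from the homotopy long exact sequence that each structure map $H^0 A' \to X$ has kernel and cokernel in $J$, i.e. is an isomorphism in $\mathscr{C}^{\heart}/J$. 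Matching this filtered colimit against the standard presentation of $\Hom_{\mathscr{C}^{\heart}/J}(X,Y)$ as a colimit over $J$-isomorphisms then identifies the two sides. I expect this last cofinality bookkeeping to be the most tedious, though routine, step, with the orthogonality computation being the genuine conceptual point.
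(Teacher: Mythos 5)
Your proposal is correct and shares the paper's overall skeleton---realize the t-structure classes as (orthogonals of) images under the quotient functor $q$, reduce everything to the orthogonality axiom, prove that axiom with the filtered-colimit formula for mapping spectra in a Verdier quotient, and identify the heart afterwards---but your execution of the key orthogonality step is the mirror image of the paper's, and is more elementary. The paper modifies the \emph{target}: it computes $\Hom(qF,qG) \cong \colim_{(a,f) \in \langle J\rangle \downarrow G} \Hom(F,\cofib(f))$ (citing \cite{nikolausscholze}), then replaces the index category by the cofinal subcategory of coconnective targets, using the localization endofunctor $g \mapsto \tau^{\ge 0}(g)$ together with \cite{HTT} Theorem 4.1.3.1, after which every single term of the colimit vanishes. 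You instead modify the \emph{source} and kill each class individually: given $\phi : A' \to B$ with $\fib(A' \to A) \in \langle J\rangle$, you refine along the connective cover $\tau_{\ge 1}A' \to A'$, which is a legitimate refinement precisely because the Serre condition forces $\tau_{\le 0}A' \in \langle J\rangle$, and the restricted map is null since $\phi$ factors through $\tau_{\le 0}A'$. The Serre hypothesis enters both arguments at the same spot (a truncation of a $\langle J\rangle$-modification is again one); what your route buys is that the orthogonality step needs no cofinality machinery at all, whereas the paper's cofinality tool is set up once and then reused for the heart identification---which is exactly where your write-up is thinnest.

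Indeed, the one slip to repair is in your heart computation: for $X, Y \in \mathscr{C}^{\heart}$ and $s : A' \to X$ with $\fib(s) \in \langle J\rangle$, a map $A' \to Y$ factors through $\tau_{\le 0}A'$, \emph{not} through $H^0A' = \tau_{\ge 0}\tau_{\le 0}A'$; the latter factorization requires $A'$ to be connective, which it need not be. The fix is the same refinement trick you already used: since $\pi_n(X) = 0$ for $n \le -1$, the long exact sequence shows $\pi_n(A')$ is a quotient of $\pi_n(\fib(s)) \in J$ in those degrees, so $\tau_{\le -1}A' \in \langle J\rangle$ and the modifications with connective source form a cofinal subfamily of the index category; on that subfamily your identification $\Hom(A',Y) \cong \Hom_{\mathscr{C}^{\heart}}(H^0A',Y)$ is valid. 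With this in place, matching against the presentation of $\Hom_{\mathscr{C}^{\heart}/J}(X,Y)$ as a filtered colimit along source $J$-isomorphisms is indeed the routine bookkeeping you anticipate; the paper carries out the dual version, citing \cite{stacks} for the colimit over target $J$-isomorphisms $y \to z$ and proving the required cofinality by observing that the relevant $\pi_0$ functor is a right adjoint.
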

\begin{proof}
  Let $p: \mathscr{C} \to \mathscr{C}/\langle J \rangle$ denote the quotient map.
  We will construct a t-structure on the quotient whose connective and coconnective parts are the images of those of $\mathscr{C}$.
  Then it will automatically be unique (such that $p$ is t-exact) because any other t-structure would have connective and coconnective parts
  containing those of this one.

  To show that the above description defines a t-structure, it suffices to show that the space $\Hom(p(F),p(G))$ is
  contractible if $F \in \mathscr{C}_{\ge 1}$ and $G \in \mathscr{C}^{\ge 0}$.
  We have the computation (see \cite{nikolausscholze} Theorem I.3.3.)
  \[\Hom(p(F),p(G)) \cong \colim_{(a,f) \in \langle J \rangle \downarrow G}{\Hom(F,\cofib(f))}\]
  where the indexing category is filtered because $\langle J \rangle \downarrow G$ has finite colimits computed in $\mathscr{C}$.
  The category $\langle J \rangle \downarrow G$ can be identified with the subcategory of $\mathscr{C}_{G/}$ consisting of objects $g: G \to H$ such
  that $\fib(g) \in \langle J \rangle$. This category has an endofunctor $L$ which takes $g: G \to H$ to $\tau^{\ge 0}(g) : G \to \tau^{\ge 0}H$.
  Note that we have to use the fact that $J$ is a Serre subcategory to deduce that the fibre of $\tau^{\ge 0}(g)$ is in $\langle J \rangle$
  (by studying the associated long exact sequences of homotopy groups).
  We can recognize $L$ as a localization functor by condition (3) of \cite{HTT} Proposition 5.2.7.4 .

  Using \cite{HTT} Theorem 4.1.3.1, we can conclude that the image of $L$ is cofinal, and thus also filtered. So the Hom space
  \[\Hom(p(F),p(G)) \cong \colim_{\{(g,H) \in \mathscr{C}^{\ge 0}_{G/} | \fib(g) \in \langle J \rangle\}}{\Hom(F,H)}\]
  is contractible by \cite{HTT} Lemma 5.3.1.20.

  Finally, let $x$ and $y$ be two element of $\mathscr{C}^{\heart}$. By \cite{HTT} Theorem 4.1.3.1, the functor
  \[\pi_0: {\{(g,z) \in \mathscr{C}^{\ge 0}_{y/} |\, \fib(g) \in \langle J \rangle\}}
    \to  {\{(g,z) \in \mathscr{C}^{\heart}_{y/} |\, \mathrm{ker}(g), \mathrm{coker}(g)\in  J \}} \]
  which takes $(g,z)$ to $(\pi_0(g),\pi_0(z))$ is cofinal (since it is a right adjoint). Hence, we have the isomorphism of spaces
  \[\Hom(p(x),p(y)) \cong \colim_{\{(g,z) \in \mathscr{C}^{\heart}_{y/} |\, \mathrm{ker}(g), \mathrm{coker}(g)\in  J \}}{\Hom_{\mathscr{C}}(x,z)}\]
  which agrees with the mapping set in $\mathscr{C}^{\heart}/J$ by \cite{stacks} Lemma 02MS and Remark 05Q0.
\end{proof}

The following result is a theorem of Gabriel, see \cite{gabriel}.
\begin{theorem} \label{Gabrieltheorem}
  For each $k$, the subquotient $\Coh{R}^{\heart}_{(k+1)}/\Coh{R}^{\heart}_{(k)}$ is a finite-length abelian category with the isomorphism classes of simple objects in
  bijection with primes ideals $\mathfrak{p} \subset \pi_0(R)$ such that $\pi_0(R)/\mathfrak{p}$ is $(k+1)$-dimensional.
\end{theorem}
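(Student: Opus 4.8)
The plan is to reduce the statement to a classical fact about finitely generated modules over the Noetherian commutative ring $A := \pi_0(R)$, using the identification $\Coh{R}^{\heart} \cong \Coh{\pi_0(R)}^{\heart}$ noted above; thus $\Coh{R}^{\heart}$ is the category of finitely generated $A$-modules, $\Coh{R}^{\heart}_{(k)}$ is the Serre subcategory of modules $M$ with $\dim \mathrm{Supp}(M) \le k$, and I must analyze the Serre quotient $\mathscr{Q} := \Coh{R}^{\heart}_{(k+1)}/\Coh{R}^{\heart}_{(k)}$. The candidate simple objects are the images of $A/\mathfrak{p}$ for primes $\mathfrak{p} \subset A$ with $\dim A/\mathfrak{p} = k+1$, and the claim is that these exhaust the isomorphism classes of simples while distinct primes give distinct classes.

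First, I would establish finite length together with the identification of the composition factors by dévissage. Every finitely generated $A$-module $M$ admits a finite prime filtration $0 = M_0 \subset \dots \subset M_n = M$ with $M_i/M_{i-1} \cong A/\mathfrak{p}_i$. The image of $A/\mathfrak{q}$ in $\mathscr{Q}$ vanishes whenever $\dim A/\mathfrak{q} \le k$, so applying the exact quotient functor to this filtration exhibits the image of $M$ as a finite iterated extension of the images of those $A/\mathfrak{p}_i$ with $\dim A/\mathfrak{p}_i = k+1$. This simultaneously shows that every object of $\mathscr{Q}$ has finite length and that every simple object is the image of some $A/\mathfrak{p}$ with $\dim A/\mathfrak{p} = k+1$.

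Next I would check that for such $\mathfrak{p}$ the image of $A/\mathfrak{p}$ is actually simple and nonzero. A nonzero proper submodule of $A/\mathfrak{p}$ has the form $\mathfrak{a}/\mathfrak{p}$ with $\mathfrak{a} \supsetneq \mathfrak{p}$, and the corresponding quotient $A/\mathfrak{a}$ is supported on the proper closed subset $V(\mathfrak{a}) \subsetneq V(\mathfrak{p})$ of the irreducible space $V(\mathfrak{p})$ of dimension $k+1$; since a proper closed subset of an irreducible Noetherian space has strictly smaller dimension, $\dim A/\mathfrak{a} \le k$, so $A/\mathfrak{a} \in \Coh{R}^{\heart}_{(k)}$ and the inclusion $\mathfrak{a}/\mathfrak{p} \hookrightarrow A/\mathfrak{p}$ becomes an isomorphism in $\mathscr{Q}$. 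Hence the image of $A/\mathfrak{p}$ has no nontrivial subobjects, and it is nonzero as $\dim A/\mathfrak{p} = k+1 > k$.

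Finally, to see that distinct primes give non-isomorphic simples I would use localization at $\mathfrak{p}$. If $\dim \mathrm{Supp}(M) \le k$ then $\mathfrak{p} \notin \mathrm{Supp}(M)$ (otherwise $V(\mathfrak{p}) \subseteq \mathrm{Supp}(M)$ would force dimension $\ge k+1$), so $M_{\mathfrak{p}} = 0$; thus the exact functor $M \mapsto M_{\mathfrak{p}}$ kills $\Coh{R}^{\heart}_{(k)}$ and, by the universal property of the Serre quotient, descends to an exact functor $\mathscr{Q} \to \mathrm{Mod}(A_{\mathfrak{p}})$. It sends $A/\mathfrak{p}$ to $\kappa(\mathfrak{p}) \ne 0$, whereas for a different prime $\mathfrak{p}'$ with $\dim A/\mathfrak{p}' = k+1$ one has $\mathfrak{p} \not\supseteq \mathfrak{p}'$ (a strict inclusion would drop the dimension, and equality is excluded), whence $(A/\mathfrak{p}')_{\mathfrak{p}} = 0$; therefore the two images cannot be isomorphic in $\mathscr{Q}$. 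I expect the only genuinely category-theoretic point, and thus the main thing to get right, to be the descent of this localization functor through the Serre quotient (equivalently, computing $\mathrm{Hom}$ in $\mathscr{Q}$ as the relevant filtered colimit, as in Lemma \ref{tstructurelemma}); the remaining ingredients are the standard dévissage and dimension-theoretic facts for Noetherian rings, which is why the result is attributed to Gabriel.
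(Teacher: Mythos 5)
Your proof is correct and takes essentially the same route as the paper: dévissage via prime filtrations (Stacks 00L0) followed by showing the image of $A/\mathfrak{p}$ is simple in the quotient, where the paper verifies simplicity by localizing at $\mathfrak{p}$ while you use the equivalent fact that a proper closed subset of the irreducible set $V(\mathfrak{p})$ has strictly smaller dimension. You additionally verify that distinct primes yield non-isomorphic simples via the localization functor descended to the Serre quotient -- a point the paper's proof leaves implicit -- which is a welcome completion of the bijectivity claim rather than a different approach.
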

\begin{proof}
  By \cite{stacks} Lemma 00L0, any object in $\Coh{R}^{\heart}_{(k+1)}$ has a finite
  length filtration with subquotients of the form $\pi_0(R)/\mathfrak{p}$ where $\mathfrak{p}$ is prime and $\pi_0(R)/\mathfrak{p}$ has
  dimension at most $k+1$. So it suffices to show that $\pi_0(R)/\mathfrak{p}$ is simple in $\Coh{R}^{\heart}_{(k+1)}/\Coh{R}^{\heart}_{(k)}$
  if $\pi_0(R)/\mathfrak{p}$ is of dimension exactly $k+1$. This follows from the fact that
  for any submodule $I/\mathfrak{p} \subseteq \pi_0(R)/\mathfrak{p}$,
  either $I/\mathfrak{p}$ or $\pi_0(R)/I$ has dimension at most $k$--which we can show by localizing at $\mathfrak{p}$.
\end{proof}

\section{Descendability of Faithfully Flat covers of Perfect Stacks} \label{section2}
In this section we apply the results of the previous section to deduce some results about descendability of maps of stacks.

\begin{lemma} \label{cohdimandinjdim}
  Let $\mathscr{C}$ be a small idempotent-complete stable category with a right-bounded t-structure
  such that its $\lambda$-dimension is at most $n$.
  Suppose $F : \opposite{\mathscr{C}} \to \Sp$ is a functor (between stable categories)
  which preserves coconnective objects and has cohomological dimension at most $m$, i.e.
  $F(x) \in \Sp^{[0,m]} $ for all $x \in \opposite{(\mathscr{C}^{\heart})}$.
  Then, $F$ has injective dimension at most $n+m$.
\end{lemma}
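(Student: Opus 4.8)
The plan is to run the minimal injective resolution machinery of \cref{lambdabound1} directly on $F$, the point being that the cohomological dimension hypothesis forces the $m$-th cosyzygy of $F$ to be t-exact, after which the $\lambda$-dimension hypothesis caps the remaining length of the resolution. First I would record that $F$ is coconnective in $\Ind{\mathscr{C}}$: since $F$ preserves coconnective objects, $F(x) \in \Sp^{\ge 0}$ for every connective $x$, which is exactly coconnectivity. Being coconnective, $F$ admits a minimal injective resolution $F \cong \lim_k \mathrm{Fil}^k(F)$ with $\mathrm{Fil}^0(F) \cong 0$, injective graded pieces $I^k := \fib(\mathrm{Fil}^{k+1}(F) \to \mathrm{Fil}^k(F))[k]$, and coconnective cosyzygies $Z^k := \fib(F \to \mathrm{Fil}^k(F))[k]$, exactly as in the proof of \cref{lambdabound1}. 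It then suffices to prove $I^k \cong 0$ for $k > m+n$, equivalently $F \cong \mathrm{Fil}^{m+n+1}(F)$: for then each graded piece of $\Hom(X,F)$ with $X \in \Ind{\mathscr{C}}^{\heart}$ is $\Hom(X,I^k)[-k] \in \Sp^{[k,k]}$ for $0 \le k \le m+n$, since injective objects $I^k$ satisfy $\Hom(X,I^k) \in \Sp^{\heart}$ for all $X$ in the heart.

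The crux is to show that the $m$-th cosyzygy $Z^m$ is a t-exact functor $\opposite{\mathscr{C}} \to \Sp$. As $Z^m$ is coconnective it already carries connective objects into $\Sp^{\ge 0}$; for the other half of t-exactness I would use that $\mathscr{C}$ is right-bounded, so every coconnective object has a finite Postnikov tower with heart graded pieces, and then exactness of $Z^m$ together with the shift bookkeeping reduces everything to checking $Z^m(x) \in \Sp^{\heart}$ for $x \in \mathscr{C}^{\heart}$. Here I would compute $Z^m(x) \cong \fib\!\left(F(x) \to \mathrm{Fil}^m(F)(x)\right)[m]$. The injective objects $I^j$ are themselves t-exact (the defining vanishing of positive $\Ext$ against the heart, in the sense of \cite{SAG} C.5.7.9), so $I^j(x) \in \Sp^{\heart}$, and the length-$m$ filtration of $\mathrm{Fil}^m(F)$ with graded pieces $I^j[-j]$ for $0 \le j \le m-1$ yields $\mathrm{Fil}^m(F)(x) \in \Sp^{[0,m-1]}$. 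Combined with the cohomological dimension bound $F(x) \in \Sp^{[0,m]}$, the fibre lies in $\Sp^{[0,m]}$, whence $Z^m(x) \in \Sp^{[-m,0]}$; intersecting with the coconnectivity $Z^m(x) \in \Sp^{\ge 0}$ forces $Z^m(x) \in \Sp^{\heart}$, as required.

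Finally, the tail $Z^m \to I^m \to I^{m+1} \to \cdots$ of the minimal injective resolution of $F$ is itself a minimal injective resolution of $Z^m$, so applying the $\lambda$-dimension bound to the t-exact functor $Z^m$ gives $Z^m \cong \mathrm{Fil}^{n+1}(Z^m)$, i.e. $I^k \cong 0$ for $k > m+n$. This is precisely the statement $F \cong \mathrm{Fil}^{m+n+1}(F)$ needed above, so $F$ has injective dimension at most $n+m$. I expect the main obstacle to be the verification that $Z^m$ is t-exact: one must handle the shift conventions carefully in the fibre computation, and one leans on the fact that injective objects of the heart evaluate into $\Sp^{\heart}$ on compact heart objects, which is where the Grothendieck-prestable structure of $\Ind{\mathscr{C}}$ and the theory of injective hulls genuinely enter.
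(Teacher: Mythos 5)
Your proof is correct and is essentially the paper's own argument: the paper proves the lemma by induction on $m$, peeling off one injective hull at a time --- the cofibre of $F \to I$ is coconnective (hulls are monomorphisms on $\pi_0$) and has cohomological dimension at most $m-1$, with the base case $m=0$ being t-exactness, where the $\lambda$-dimension hypothesis applies. Your construction of the full minimal resolution and the verification that the $m$-th cosyzygy $Z^m$ is t-exact is exactly this induction unrolled, at the modest extra cost of invoking the termination of minimal injective resolutions of objects of finite injective dimension (a fact the paper also uses, in the proof of Proposition \ref{lambdabound1}), which the inductive phrasing avoids.
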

\begin{proof}
  We induct on $m$, with the base case being true by definition. So assume $m > 0$.
  Let $I$ be an injective hull of $F$ in $\Ind{\mathscr{C}}$.
  The cofibre, as an object of $\mathrm{Fun}^{ex}(\mathscr{C}^{\operatorname{op}},\Sp)$, preserves coconnective objects
  (because it is coconnective in $\Ind{\mathscr{C}}$ as injective hull are injective on $\pi_0$) and has cohomological dimension at most $m-1$.
  So we are done by the induction hypothesis.
\end{proof}

\begin{proposition}
  Let $\mathscr{C}$ be a Grothendieck prestable category and $T$ be a colimit-preserving monad on $\Sp(\mathscr{C})$.
  Then $\Mod_T(\Sp(\mathscr{C}))$ has a natural t-structure such that
  \begin{itemize}
  \item The connective objects $\Mod_T(\Sp(\mathscr{C}))_{\ge 0}$ are generated under colimits and extensions by
    the objects $F_T(L)$ for $L \in \mathscr{C}$. Here $F_T : \Sp(\mathscr{C}) \to \Mod_T(\Sp(\mathscr{C}))$
    is the left adjoint in the monadic adjunction associated to the monad $T$.
  \item An object of $\Mod_T(\Sp(\mathscr{C}))$ is coconnective if and only if its underlying object in $\Sp(\mathscr{C})$ is.
  \item The connective objects form a Grothendieck prestable category.
  \item The t-structure is right t-complete.
  \end{itemize}
\end{proposition}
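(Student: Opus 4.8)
The plan is to realize the desired t-structure as the one generated by a small set of connective objects, and then to read off its two halves through the free--forgetful adjunction $F_T \dashv U_T$, where I write $U_T\colon \Mod_T(\Sp(\mathscr{C})) \to \Sp(\mathscr{C})$ for the forgetful functor. Since $\mathscr{C}$ is presentable, so is its stabilization $\Sp(\mathscr{C})$; because $T$ preserves colimits, $\Mod_T(\Sp(\mathscr{C}))$ is again a presentable stable category, and $U_T$ preserves all small limits and colimits while remaining conservative (\cite{HA} 4.2.3.5, 4.2.3.7). I would fix a small set $\{L_i\}$ of objects generating $\mathscr{C}$ under colimits and apply \cite{HA} Proposition 1.4.4.11 to the small set $\{F_T(L_i)\}$; this produces an accessible t-structure whose connective part is the smallest full subcategory containing the $F_T(L_i)$ and closed under colimits and extensions. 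As both $F_T$ and the inclusion $\mathscr{C} \hookrightarrow \Sp(\mathscr{C})$ preserve colimits, this subcategory coincides with the one generated under colimits and extensions by all $F_T(L)$, $L \in \mathscr{C}$, which is the first bullet point; presentability of the connective part follows from accessibility of the t-structure.

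For the second bullet point I would identify the coconnective part as the right orthogonal of the connective part. An object $Y$ is coconnective precisely when $\Hom(Z,Y)$ is a coconnective spectrum for every $Z$ in the connective part, and since that part is generated under colimits and extensions by the $F_T(L)$ while $\Hom(-,Y)$ carries colimits to limits (and coconnective spectra are closed under limits and extensions), it suffices to test $Z = F_T(L)$. By adjunction $\Hom(F_T(L),Y) \simeq \Hom_{\Sp(\mathscr{C})}(L, U_T(Y))$, so this condition holds for all $L \in \mathscr{C} = \Sp(\mathscr{C})_{\geq 0}$ exactly when $U_T(Y)$ is coconnective. Hence $Y$ is coconnective iff $U_T(Y)$ is, which is the claim.

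The third bullet point is where the real content lies, and it is the one place where colimit-preservation of $T$ is indispensable. I would invoke the dictionary of \cite{SAG} C.1.4 together with \cite{SAG} C.1.2.9: a presentable stable category carrying an accessible, right-complete t-structure that is compatible with filtered colimits has Grothendieck prestable connective part, the mechanism being that compatibility forces filtered colimits in the connective part to be left exact. Compatibility means precisely that the coconnective part is closed under filtered colimits, and this I would transfer along $U_T$. Given a filtered diagram $\{Y_j\}$ of coconnective $T$-modules, we have $U_T(\colim_j Y_j) \simeq \colim_j U_T(Y_j)$ because $U_T$ preserves colimits, and this colimit is coconnective since the t-structure on $\Sp(\mathscr{C})$ is compatible with filtered colimits ($\mathscr{C}$ being Grothendieck prestable). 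By the second bullet point $\colim_j Y_j$ is then coconnective, giving the required closure.

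Finally, for right t-completeness I would use that a presentable stable category (which has countable coproducts) is right complete as soon as the very coconnective objects vanish, i.e. $\bigcap_n \mathscr{D}_{\leq -n} = 0$, the dual of \cite{HA} Proposition 1.2.1.19. If $Y$ lies in every $\Mod_T(\Sp(\mathscr{C}))_{\leq -n}$, then by the shifted form of the second bullet point $U_T(Y)$ lies in every $\Sp(\mathscr{C})_{\leq -n}$, and hence vanishes because the t-structure on $\Sp(\mathscr{C})$ is right complete; conservativity of $U_T$ then forces $Y \simeq 0$. I expect the main obstacle to be the third bullet point: pinning down the precise form of the Grothendieck-prestable criterion in \cite{SAG} and confirming that filtered-colimit compatibility genuinely transfers along $U_T$, since this is exactly where the hypothesis that $T$ preserves colimits is used.
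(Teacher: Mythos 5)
Your proposal is correct and follows essentially the same route as the paper: both construct the t-structure via \cite{HA} Proposition 1.4.4.11 applied to the objects $F_T(L)$, characterize coconnectivity by right orthogonality through the adjunction $F_T \dashv U_T$ (so that $U_T$ detects coconnectivity), transfer compatibility with filtered colimits along the colimit-preserving forgetful functor, and deduce right completeness from right separatedness (via conservativity of $U_T$) together with the dual of \cite{HA} Proposition 1.2.1.19. The only differences are expository: you spell out the smallness reduction in applying 1.4.4.11 and re-derive by hand the orthogonality characterization that the paper cites as \cite{HA} Remark 1.2.1.3.
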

\begin{proof}
  \cite{HA} Proposition 1.4.4.11 guarantees the existence of a t-structure on $\Mod_T(\Sp(\mathscr{C}))$
  with the connective part as described. The characterization of the coconnective objects follows directly from
  the fact that coconnective objects are exactly right orthogonal to connective objects shifted by $1$ (\cite{HA} Remark 1.2.1.3).
  It suffices to show that this t-structure is compatible with filtered colimits and is right-complete.

  The coconnective objects in $\Mod_T(\Sp(\mathscr{C}))$
  are closed under filtered colimits as filtered colimits are preserved by the forgetful functor (\cite{HA} Corollary 4.2.3.5)
  and coconnectiveness is reflected by the forgetful functor. The t-structure is right-separated as the forgetful functor is conservative
  (in addition to preserving coconnective objects) and the t-structure on $\Sp(\mathscr{C})$ is right-separated. Coconnective objects
  being closed under filtered colimits (and therefore direct sums) together with right-separatedness implies right-completeness by \cite{HA} Proposition 1.2.1.19.
\end{proof}

We recall a definition of \cite{mathewmondal} (Definition 2.4 in \textit{loc. cit.}),
rephrased as a generalization of the notion of a faithfully flat monad in \cite{SAG} Section D.6.4.
See also \cite{burklundlevy} Definition 4.8.
\begin{definition} \label{fflatdef}
  Let $\mathscr{C}$ be a Grothendieck prestable category. 
  A colimit-preserving monad $T$ on $\Sp(\mathscr{C})$ is called faithfully flat if the functor
  \[F_T : \Sp(\mathscr{C}) \to \Mod_T(\Sp(\mathscr{C}))\]
  is t-exact and conservative on $\mathscr{C}^{\heart}$--where the t-structure on $\Mod_T(\Sp(\mathscr{C}))$ is the one described above.
\end{definition}
\begin{remark} \label{comparisonremark}
  In Definition \ref{fflatdef}, it suffices to check that coconnective objects are preserved because connective objects are always preserved by $F_T$.
  Also, if $T$ is faithfully flat, then $F_T$ is conservative on $\Sp(\mathscr{C})^{\ge 0}$ as $F_T$ is t-exact
  and the t-structure on $\Sp(\mathscr{C})$ is right-separated.
  Finally, we observe that if $T$ is faithfully flat and the t-structure on $\Sp(\mathscr{C})$ is left-separated, then $F_T$ is conservative.
\end{remark}

The following proposition is Proposition 2.5 in \cite{mathewmondal}. We repeat the proof here.
\begin{proposition} \label{equivcondfflat}
  Let $\mathscr{C}$ be a Grothendieck prestable category.
  A colimit-preserving monad $T$ on $\Sp(\mathscr{C})$ is a faithfully flat 
  if and only if for any coconnective object $M \in \Sp(\mathscr{C})^{\ge 0}$, 
  \[\mathrm{cofib}(M \to T(M)) \in \Sp(\mathscr{C})^{\ge 0}\]
\end{proposition}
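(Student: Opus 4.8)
The plan is to deduce both implications from one structural observation: after applying the free functor $F_T$, the unit $\eta_M : M \to T(M)$ becomes a \emph{split} monomorphism. Write $G_T$ for the forgetful functor right adjoint to $F_T$, so that $T = G_T F_T$, the object $T(M)$ is the underlying object of $F_T(M)$, and $\eta$ is the unit of the adjunction; the map $M \to T(M)$ in the statement is $\eta_M$ and the cofibre is formed in $\Sp(\mathscr{C})$. The triangle identity $\epsilon_{F_T(M)} \circ F_T(\eta_M) = \id_{F_T(M)}$ exhibits $F_T(\eta_M)$ as a split monomorphism in $\Mod_T(\Sp(\mathscr{C}))$. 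Since $F_T$ is a left adjoint of stable categories it is exact, so applying it to the cofibre sequence $M \to T(M) \to \cofib(\eta_M)$ yields a cofibre sequence whose first map splits; hence $F_T(\cofib(\eta_M))$ is a retract of $F_T(T(M))$. I will combine this with two facts read off from the t-structure constructed above: $G_T$ both preserves and reflects coconnectivity, and for t-exact $F_T$ the formation of $H^{-1}$ commutes with $F_T$, i.e. $H^{-1}(F_T(-)) \simeq F_T^{\heart}(H^{-1}(-))$.

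For the forward direction, assume $T$ is faithfully flat and let $M$ be coconnective. Then $F_T$ is t-exact, so $F_T(M)$ is coconnective, whence $T(M) = G_T F_T(M)$ and therefore $F_T(T(M))$ are coconnective; being a retract of the latter, $F_T(\cofib(\eta_M))$ is coconnective, i.e. $H^{-1}(F_T(\cofib(\eta_M))) = 0$. Commuting $H^{-1}$ past $F_T$ gives $F_T^{\heart}(H^{-1}(\cofib(\eta_M))) = 0$, and conservativity on the heart forces $H^{-1}(\cofib(\eta_M)) = 0$. The long exact sequence attached to $M \to T(M) \to \cofib(\eta_M)$ kills $H^i(\cofib(\eta_M))$ for $i \le -2$ automatically, as $M$ and $T(M)$ are coconnective; hence $\cofib(\eta_M)$ is coconnective, as required.

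For the converse, assume $\cofib(\eta_M)$ is coconnective for every coconnective $M$. Connective objects are always preserved by $F_T$ (Remark~\ref{comparisonremark}), so t-exactness reduces to preservation of coconnective objects; and for coconnective $M$ the sequence $M \to T(M) \to \cofib(\eta_M)$ exhibits $T(M)$ as an extension of two coconnective objects, hence coconnective, whereupon $G_T$ reflecting coconnectivity makes $F_T(M)$ coconnective. For conservativity on the heart, take $N \in \mathscr{C}^{\heart}$; the long exact sequence gives $H^{-1}(\cofib(\eta_N)) \simeq \ker(H^0(\eta_N))$, so coconnectivity of $\cofib(\eta_N)$ means $N \to H^0(T(N))$ is a monomorphism. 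If $F_T(N) = 0$ then $T(N) = 0$, the target vanishes, and $N = 0$; thus $F_T$ is conservative on the heart.

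The genuinely load-bearing step is the splitting of $F_T(\eta_M)$: it is the categorical shadow of faithfully flat descent (the Amitsur--\v{C}ech splitting of $S \to S \otimes_R S$), and it is exactly what converts coconnectivity of $T(M)$ upstairs into coconnectivity of the cofibre downstairs via conservativity. The remaining work---that $G_T$ preserves and reflects coconnectivity and that $H^{-1}$ commutes with the t-exact $F_T$---is bookkeeping with the t-structure of the preceding Proposition, where I expect no difficulty.
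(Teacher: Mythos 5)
Your proof is correct and takes essentially the same route as the paper's: the forward direction hinges on the triangle-identity retraction of $F_T(\eta_M)$, making $F_T(\cofib(\eta_M))$ a summand of the coconnective $F_T(T(M))$ and then descending via t-exactness and conservativity on the heart, while the converse uses the extension argument for $T(M)$ together with heart-conservativity, exactly as in the paper. The only cosmetic difference is that the paper checks conservativity by noting $\cofib(\eta_M) \simeq M[1]$ when $F_T(M) \simeq 0$, whereas you argue via $\ker(H^0(\eta_N))$; these are interchangeable.
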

\begin{proof}
  Suppose $T$ is faithfully flat and $M$ is coconnective. Then it suffices to show that
  \[\pi_1(\mathrm{cofib}(M \to T(M))) \cong 0\]
  As $F_T$ is t-exact and conservative on $\mathscr{C}^{\heart}$, this is equivalent to
  \[\pi_1(\mathrm{cofib}(F_T(M) \to F_T(T(M)))) \cong 0\]
  But the map $F_T \to F_T \circ T$ has a retract, so the cofibre is a summand of $F_T(T(M))$.
  As $F_T(T(M))$ is coconnective, it has no $\pi_1$, and neither do its summands.

  Now suppose 
  \[\mathrm{cofib}(M \to T(M)) \in \Sp(\mathscr{C})^{\ge 0}\]
  for all $M$ coconnective. Then, $T(M)$ is coconnective for all $M$ coconnective--so $F_T$ is t-exact by Remark \ref{comparisonremark} above.
  It suffices to show that $F_T$ is conservative on $\mathscr{C}^{\heart}$.
  But if $M \in \mathscr{C}^{\heart}$ with $F_T(M) \cong 0$, then by assumption
  \[\mathrm{cofib}(M \to T(M)) \cong M[1] \in \Sp(\mathscr{C})^{\ge 0}\]
  which implies $M \cong 0$.
\end{proof}

\begin{proposition} \label{coverfflat}
  Suppose $\mathfrak{X}$ is quasi-geometric stack with a faithfully flat map $p: \Spec{R} \to \mathfrak{X}$.
  Then $p_*p^*$ is a colimit preserving faithfully flat monad on $\QC{\mathfrak{X}} \cong \Sp(\QC{\mathfrak{X}}_{\ge 0})$ (see
  \cite{SAG} Proposition 9.1.3.1).
\end{proposition}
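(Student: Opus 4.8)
The plan is to verify the two assertions separately: that $T := p_*p^*$ is a colimit-preserving monad, and that it is faithfully flat in the sense of Definition \ref{fflatdef}. For the second assertion I would deliberately avoid trying to identify $\Mod_T(\QC{\mathfrak{X}})$ with $\ModR{R}$, and instead invoke the criterion of Proposition \ref{equivcondfflat}: it suffices to show that for every coconnective $M \in \QC{\mathfrak{X}}^{\ge 0}$ the cofiber of the unit $\eta_M : M \to p_*p^*M$ is again coconnective.

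For the monad structure, $p^*$ preserves all colimits since it is a left adjoint, so the only real point is $p_*$. Here I would use that $\mathfrak{X}$ is quasi-geometric: its diagonal is quasi-affine, the graph of $p$ is a base change of this diagonal, and the projection $\Spec{R} \times \mathfrak{X} \to \mathfrak{X}$ is affine, so $p$ is quasi-affine. Pushforward along a quasi-affine morphism preserves finite colimits (being exact) and filtered colimits (by quasi-compactness), hence all small colimits; thus $p_*$, and therefore $T = p_*p^*$, is colimit-preserving. Together with the identification $\QC{\mathfrak{X}} \cong \Sp(\QC{\mathfrak{X}}_{\ge 0})$ of \cite{SAG} Proposition 9.1.3.1, this exhibits $T$ as a colimit-preserving monad on the stabilization of a Grothendieck prestable category, as required by the framework.

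For the coconnectivity statement, faithful flatness gives that $p^*$ is $t$-exact (flatness) and conservative (flat descent). Being $t$-exact and conservative, $p^*$ reflects coconnectivity: if $p^*X$ is coconnective then $p^*(\tau_{\ge 1}X) \cong \tau_{\ge 1}(p^*X) \cong 0$, so $\tau_{\ge 1}X \cong 0$. Dually, $p_*$ is the right adjoint of the right-$t$-exact functor $p^*$, hence left $t$-exact, so it preserves coconnective objects. Since $p^*$ preserves cofibers and reflects coconnectivity, it is enough to check that $\cofib(p^*(\eta_M))$ is coconnective. The key observation is the triangle identity $\epsilon_{p^*M}\circ p^*(\eta_M) = \id_{p^*M}$, which exhibits $p^*(\eta_M)$ as a split monomorphism; consequently $\cofib(p^*(\eta_M))$ is a retract of $p^*p_*p^*M$. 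Now $M$ coconnective forces $p^*M$ coconnective ($p^*$ $t$-exact), then $p_*p^*M$ coconnective ($p_*$ left $t$-exact), then $p^*p_*p^*M$ coconnective ($p^*$ $t$-exact); as coconnective objects are closed under retracts, we conclude $\cofib(p^*(\eta_M))$—and hence $\cofib(\eta_M)$—is coconnective.

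The formal half, faithful flatness of the monad, is painless once Proposition \ref{equivcondfflat} and the split-monomorphism trick are in place, and it uses the geometry of $p$ only through $t$-exactness and conservativity of $p^*$. I therefore expect the main obstacle to be the colimit-preservation of $p_*$: this is the single place where the quasi-geometric hypothesis is essential, via the quasi-affineness of $p$, and it is worth being careful that quasi-affine pushforward really does commute with all colimits and not merely filtered ones. The remaining delicate point is the $t$-exactness bookkeeping ensuring that $p^*$ reflects coconnectivity.
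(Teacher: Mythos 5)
Your proof is correct, and while the colimit-preservation half coincides with the paper's argument, the faithful-flatness half takes a genuinely different route. The paper's proof consists of three citations: \cite{SAG} Proposition 6.3.4.6 gives that $p^* \dashv p_*$ is monadic for a colimit-preserving monad (using that $\mathfrak{X}$ has quasi-affine diagonal), \cite{SAG} Remark 9.1.3.4 gives faithful flatness, and \cite{SAG} Corollary 6.3.4.3 gives $\QC{\mathfrak{X}}$-linearity. Your treatment of colimit-preservation---$p$ is quasi-affine because its graph is a base change of the quasi-affine diagonal while the projection $\Spec{R} \times \mathfrak{X} \to \mathfrak{X}$ is affine, and quasi-affine pushforward preserves colimits---is exactly the content behind the first citation, so there you are unfolding the paper's step rather than replacing it. The real divergence is that the paper's appeal to \cite{SAG} Remark 9.1.3.4 (t-exactness and conservativity of $p^*$) verifies Definition \ref{fflatdef} only after identifying $F_T$ with $p^*$ under the monadic equivalence $\Mod_T(\QC{\mathfrak{X}}) \simeq \ModR{R}$, which tacitly requires matching the t-structure of the proposition preceding Definition \ref{fflatdef} with the usual one on $\ModR{R}$. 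You bypass the Eilenberg--Moore category entirely by checking the intrinsic criterion of Proposition \ref{equivcondfflat}: the triangle identity splits $p^*(\eta_M)$, so $\cofib(p^*\eta_M)$ is a summand of the coconnective object $p^*p_*p^*M$, and t-exactness plus conservativity of $p^*$ (the same flat-descent input) descend coconnectivity of $\cofib(\eta_M)$. This is the same retraction trick the paper uses inside its own proof of Proposition \ref{equivcondfflat}, transplanted from the free--forgetful adjunction to $(p^*,p_*)$. Your route is more self-contained and dodges the t-structure-comparison subtlety; the paper's route is shorter and records two facts your proof omits but which are used downstream, namely monadicity and the projection formula $p_*p^* \cong p_*\mathcal{O}_{\Spec{R}} \otimes (\_)$, which is what later lets descendability be phrased for the algebra $p_*\mathcal{O}_{\Spec{R}}$.

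Two small points to tighten. First, quasi-affine pushforward preserves filtered colimits because quasi-affine morphisms are quasi-compact \emph{and} quasi-separated, not by quasi-compactness alone. Second, conservativity of $p^*$ on all of $\QC{\mathfrak{X}}$ is slightly delicate (as the paper's Remark \ref{comparisonremark} notes, conservativity of a flat functor on unbounded objects needs left-separatedness of the t-structure); but this causes no gap in your argument, since the object you apply it to is $\tau_{\ge 1}\cofib(\eta_M) \cong \pi_1(\cofib(\eta_M))[1]$, a shift of a heart object, where conservativity is exactly what flat descent provides.
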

\begin{proof}
  The adjunction $p^* \dashv p_*$ is monadic for a colimit-preserving monad by \cite{SAG} Proposition 6.3.4.6 (as $\mathfrak{X}$
  has quasi-affine diagonal). The faithfully flatness follows \cite{SAG} Remark 9.1.3.4. Note that $p_*p^*$ is $\QC{\mathfrak{X}}$-linear
  by \cite{SAG} Corollary 6.3.4.3.
\end{proof}

We recall the notion of index of descendability from \cite{bhattscholze} Definition 11.18.
\begin{definition}
  Let $\mathscr{C}$ be a symmetric monoidal stable category.
  We say an algebra $A \in \mathrm{CAlg}(C)$ is descendable of index at most $n$
  if the natural map $\fib(\eta_A)^{\otimes n} \to \pmb{1}_{\mathscr{C}}$ is nilhomotopic, where $\eta_A$ is the unit map for $A$.
\end{definition}

By \cite{bhattscholze} Lemma 11.20, the index of descendability is finite iff the algebra is descendable
in the sense of \cite{descendable} Definition 3.18. It may be desirable to modify to index of descendability
down by $1$, so that the unit $\pmb{1}_{\mathscr{C}}$ would be descendable of index $0$ instead of $1$ in general. However, we refrain from
this change here to avoid confusion.

\begin{remark}
  It is possible to define descendability for an exact monad on a stable category in an analogous manner, further generalizing
  the generalization in \cite{SAG} Definition D.3.1.1. However, we do not pursue this further in the current paper.
\end{remark}

\begin{theorem} \label{mainthm1}
  Let $\mathfrak{X}$ be a perfect stack with a faithfully flat map from 
  a Noetherian $\mathbb{E}_{\infty}$-ring of Krull dimension $n$.
  Then every faithfully flat algebra\footnote{we say an algebra $A \in \CAlg(\QC{\mathfrak{X}})$ is faithfully flat if the monad $A \otimes \_$ is}
  in $\QC{\mathfrak{X}}$ is descendable of index at most $cd(\mathfrak{X})+n+1$.
\end{theorem}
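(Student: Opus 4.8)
The plan is to reduce descendability to a finite injective-dimension bound for $\mathcal{O}_{\mathfrak{X}}$ and then extract that bound from the Gruson--Jensen machinery of the previous section. First I would record the mechanism by which faithful flatness converts an injective-dimension bound into a descendability bound. Given a faithfully flat algebra $B$ with $I := \fib(\eta_B)$, Proposition \ref{equivcondfflat} shows that $\cofib(M \to B \otimes M)$ is coconnective whenever $M$ is; since $\cofib(M \to B\otimes M) \cong (I \otimes M)[1]$, a short shift argument upgrades this to $I \otimes \QC{\mathfrak{X}}^{\ge j} \subseteq \QC{\mathfrak{X}}^{\ge j+1}$, and iterating gives $I^{\otimes k} \otimes M \in \QC{\mathfrak{X}}^{\ge k}$ for all coconnective $M$. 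Taking $M = \mathcal{O}_{\mathfrak{X}}$ yields $I^{\otimes k} \in \QC{\mathfrak{X}}^{\ge k}$. Consequently, if $\mathcal{O}_{\mathfrak{X}}$ admits an injective resolution of length $N$ (in the sense made precise below), then mapping the $(N{+}1)$-coconnective object $I^{\otimes(N+1)}$ into this resolution forces $\pi_0 \Hom(I^{\otimes(N+1)}, \mathcal{O}_{\mathfrak{X}}) = 0$, i.e. the composite $I^{\otimes(N+1)} \to \mathcal{O}_{\mathfrak{X}}$ is nullhomotopic, so $B$ is descendable of index at most $N+1$. This is the incarnation of the \cite{bhattscholze} criterion in our setting, and it reduces the theorem to the estimate $N \le cd(\mathfrak{X}) + n$.

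Next I would produce this estimate. Because $\mathcal{O}_{\mathfrak{X}}$ typically has infinite injective dimension in $\QC{\mathfrak{X}}^{\heart}$ when $R$ is not regular, the key idea (from Section 6 of \cite{grusonjensen}) is to replace $\QC{\mathfrak{X}}$ by a functor category $\Ind{\mathscr{C}}$ --- for a suitable small idempotent-complete stable $\mathscr{C}$ with right-bounded $t$-structure modeled on the coherent sheaves of the cover --- inside which the image of $\mathcal{O}_{\mathfrak{X}}$ acquires finite injective dimension, namely its pure-injective dimension. Granting that $\mathcal{O}_{\mathfrak{X}}$ is represented by a functor $F \colon \opposite{\mathscr{C}} \to \Sp$ which preserves coconnective objects and has cohomological dimension at most $cd(\mathfrak{X})$ on the heart (reflecting that $\Hom(x,\mathcal{O}_{\mathfrak{X}})$ computes global cohomology), Lemma \ref{cohdimandinjdim} bounds its injective dimension by $\lambda\text{-dim}(\mathscr{C}) + cd(\mathfrak{X})$.

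It then remains to show $\lambda\text{-dim}(\mathscr{C}) \le n$. Here I would use the dimension-of-support filtration $\Coh{R}_{(0)} \subseteq \Coh{R}_{(1)} \subseteq \cdots \subseteq \Coh{R}_{(n)} = \Coh{R}$, whose $n+1$ subquotients are, by Gabriel's Theorem \ref{Gabrieltheorem}, finite-length abelian categories; a finite-length heart is $\aleph_0$-Noetherian (directed families of subobjects have maxima), so each subquotient has $\lambda$-dimension $0$ by Proposition \ref{lambdabound2}. Feeding the $n+1$ pieces one at a time into the subadditivity estimate of Proposition \ref{lambdabound1} (each of the $n$ splicings costing a $+1$) produces $\lambda\text{-dim} \le n$. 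Combining the three steps yields $N \le cd(\mathfrak{X}) + n$ and hence index at most $cd(\mathfrak{X}) + n + 1$.

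The main obstacle is making the middle step precise: pinning down the correct small category $\mathscr{C}$ and the correct $t$-exact (or merely coconnective-preserving) functor $F$ representing $\mathcal{O}_{\mathfrak{X}}$, and verifying the embedding $\QC{\mathfrak{X}} \hookrightarrow \Ind{\mathscr{C}}$ along which $\mathcal{O}_{\mathfrak{X}}$ gains finite injective dimension while $\Hom(I^{\otimes(N+1)},\mathcal{O}_{\mathfrak{X}})$ is still computed correctly. There is a genuine tension to resolve --- a category with finite-cohomological-dimension section functor (like $\Perf{\mathfrak{X}}$) need not have a Noetherian heart, while a category with a Gabriel-friendly heart (like $\Coh{\mathfrak{X}}$) does not obviously make $F$ bounded --- and it is precisely here that the Gruson--Jensen theory of pure-injective resolutions of flat objects does the real work. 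This is also where the regular case (in which one may take $\mathscr{C} = \Coh{\mathfrak{X}} = \Perf{\mathfrak{X}}$ and argue directly in $\QC{\mathfrak{X}}^{\heart}$) diverges from the general Noetherian case; one must additionally transfer the resulting $\lambda$-dimension bound across the faithfully flat cover $p$ of Proposition \ref{coverfflat} from $\Spec R$ to $\mathfrak{X}$. By contrast, the coconnectivity bookkeeping of the first step and the splicing of the third are routine given the propositions already in hand.
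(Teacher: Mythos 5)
Your outer scaffolding does match the paper's proof: the connectivity estimate for $\fib(\eta_A)^{\otimes k}$ from Proposition \ref{equivcondfflat}, the bound $\lambda\text{-dim} \le n$ via the dimension-of-support filtration, Theorem \ref{Gabrieltheorem}, and Propositions \ref{lambdabound1} and \ref{lambdabound2}, and the final assembly through Lemma \ref{cohdimandinjdim} are exactly the paper's steps. But the step you set aside as ``the main obstacle'' is not a technicality that Gruson--Jensen theory hands you off the shelf; it is the actual content of the proof, and without it the argument does not close. The paper resolves it concretely: with $p$ the given cover, take $\Coh{\mathfrak{X}} := (p^*)^{-1}\Coh{R}$ (bounded almost perfect sheaves), define
\[ i : \QC{\mathfrak{X}} \to \mathrm{Fun}^{ex}(\Coh{\mathfrak{X}},\Sp), \qquad i(F) = \Gamma(F \otimes \_), \]
and prove that $\Hom_{\QC{\mathfrak{X}}}(K,F) \cong \Hom(i(K),i(F))$ for \emph{every} $K$ and every \emph{flat} $F$. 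Perfectness of $\mathfrak{X}$ enters twice: generation by dualizable objects reduces to $K$ perfect, and then one writes $i(K) \cong \colim_k \Hom(\tau_{\le k}K^{\vee},\_)$ (using that objects of $\Coh{\mathfrak{X}}$ are bounded, and that Noetherianity of $R$ puts truncations of perfect complexes inside $\Coh{\mathfrak{X}}$), identifies $\Hom(i(K),i(F)) \cong \lim_k \Gamma(F \otimes \tau_{\le k}K^{\vee})$ via the enriched Yoneda lemma, and uses flatness of $F$ together with left-completeness of $\QC{\mathfrak{X}}$ to recover $\Gamma(F \otimes K^{\vee}) \cong \Hom(K,F)$. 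This also dissolves the ``tension'' you describe: Lemma \ref{cohdimandinjdim} only requires $\Gamma$ to have cohomological dimension $\le cd(\mathfrak{X})$ on the heart of $\Coh{\mathfrak{X}}$, which holds by the definition of cohomological dimension, and $\Coh{\mathfrak{X}}$ never needs to consist of perfect objects --- flatness of the target is precisely what makes Homs computable through the truncations that $\Coh{\mathfrak{X}}$ sees.

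Two further points would need repair. First, the ``transfer across the cover'' you defer cannot be performed the way you suggest: whether a t-exact conservative functor transports a $\lambda$-dimension bound from target to source is posed as an open question at the end of the paper. What actually transfers is a chain condition: pulling back the support filtration to $\Coh{\mathfrak{X}}$, each subquotient $\Coh{\mathfrak{X}}_{(k+1)}/\Coh{\mathfrak{X}}_{(k)}$ has a t-exact conservative functor to $\Coh{R}_{(k+1)}/\Coh{R}_{(k)}$, whose heart is finite-length by Lemma \ref{tstructurelemma} and Theorem \ref{Gabrieltheorem}; conservativity plus exactness forces the heart of the subquotient over $\mathfrak{X}$ to be Artinian, and only then do Propositions \ref{lambdabound2} and \ref{lambdabound1} apply --- to the subquotients over $\mathfrak{X}$ themselves, and on the opposite side, since the relevant ambient category is $\mathrm{Fun}^{ex}(\Coh{\mathfrak{X}},\Sp) \cong \Ind{\opposite{\Coh{\mathfrak{X}}}}$ (your Noetherian phrasing is harmless only because finite-length is self-dual). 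Second, your claim that taking $M = \mathcal{O}_{\mathfrak{X}}$ gives $\fib(\eta_A)^{\otimes k} \in \QC{\mathfrak{X}}^{\ge k}$ presumes $\mathcal{O}_{\mathfrak{X}}$ is coconnective; this holds for classical stacks (that is the situation of Theorem \ref{mainthm1.5}) but fails for a general spectral $\mathfrak{X}$ as in this theorem. The paper instead establishes connectivity only after applying $i$, namely $i(\fib(\eta_A)^{\otimes k}) \in \mathrm{Fun}^{ex}(\Coh{\mathfrak{X}},\Sp)^{\ge k}$, by evaluating on coconnective objects of $\Coh{\mathfrak{X}}$ and using left t-exactness of $\Gamma$.
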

\begin{proof}
  Suppose $p: \Spec{R} \to \mathfrak{X}$ is a faithfully flat map where $R$ is a Noetherian $\mathbb{E}_{\infty}$-ring of Krull dimension $n$.
  Let $\Coh{\mathfrak{X}}$ be the full subcategory of $\QC{\mathfrak{X}}$ consisting of almost perfect quasicoherent sheaves on $\mathfrak{X}$ with bounded homotopy groups.
  Because the pullback along $p$ is t-exact and conservative by \cite{SAG} Remark 9.1.3.4, $\Coh{\mathfrak{X}}$ is also the preimage of $\Coh{R}$
  under the functor $p^* : \QC{\mathfrak{X}} \to \QC{\Spec{R}}$.
  The dimension of support filtration on $\Coh{R}$ induces one on $\Coh{\mathfrak{X}}$ of length $n+1$.
  The $(k+1)$-st subquotient of this filtration $\Coh{\mathfrak{X}}_{(k+1)}/\Coh{\mathfrak{X}}_{(k)}$ admits a t-exact and conservative functor
  to $\Coh{R}_{(k+1)}/\Coh{R}_{(k)}$, which has an Artinian heart by Lemma \ref{tstructurelemma} and Theorem \ref{Gabrieltheorem}.
  Hence $\Coh{\mathfrak{X}}_{(k+1)}/\Coh{\mathfrak{X}}_{(k)}$ also has an Artinian heart.
  As it has a bounded t-structure, it is idempotent complete by \cite{SAG} Remark C.6.7.4.
  So we deduce the $\lambda$-dimension of $(\opposite{\Coh{\mathfrak{X}}})$ is at most $n$ by Propositions \ref{lambdabound1} and \ref{lambdabound2}.

  Let
  \begin{equation}\label{qcohembedding}
    i: \QC{\mathfrak{X}} \to \mathrm{Fun}^{ex}(\Coh{\mathfrak{X}},\Sp)
  \end{equation}
    
  be the functor $F \mapsto \Gamma(F \otimes \_)$. Restricted to $\Perf{\mathfrak{X}}$, we have the isomorphisms
  \[i(K) \cong \Hom(K^{\vee},\_) \cong \colim_k\Hom(\tau_{\le k}K^{\vee},\_)\]
  because each homotopy group of the colimit is eventually constant. If $K$ is perfect, one can see that the natural map
  (the isomorphism below follows from the $\Sp$-enriched Yoneda lemma, see \cite{hinich} 6.2.7.)
  \[\Hom(K,F) \to \Hom(i(K),i(F)) \to \Hom(\Hom(\tau_{\le k}K^{\vee},\_),i(F)) \cong \Gamma(F \otimes \tau_{\le k}K^{\vee})\]
  is given by the composition
  \[\Gamma(F \otimes K^{\vee}) \to \Gamma(F \otimes K^{\vee} \otimes K \otimes \tau_{\le k}K^{\vee}) \to \Gamma(F \otimes \tau_{\le k}K^{\vee})\]
  using the unit/counit relations of duality data of $K$, where the map $\mathcal{O} \to K \otimes \tau_{\le k}K^{\vee}$ is the unit map
  followed by truncation in the second variable. Hence the entire map is simply induced by truncation in the second variable
  \[\Gamma(F \otimes K^{\vee}) \to \Gamma(F \otimes \tau_{\le k}K^{\vee})\]
  If $F$ is flat, we have the isomorphism (since $\QC{\mathfrak{X}}$ is left-complete)
  \[\Gamma(F \otimes K^{\vee}) \cong \lim_k \Gamma(F \otimes \tau_{\le k}K^{\vee})\]
  which implies that
  \[\Hom(K,F) \cong \Hom(i(K),i(F))\]
  for all $K$ perfect and $F$ flat. The perfectness assumption on $K$ can then be removed as $i$ preserves colimits and $\QC{\mathfrak{X}}$
  is generated by perfect complexes.

  Now suppose $A$ is a faithfully flat algebra in $\QC{\mathfrak{X}}$ and let $\eta_A$ denote the unit map $\mathcal{O}_{\mathfrak{X}} \to A$.
  Then $\fib(\eta_A) \otimes M \in \QC{\mathfrak{X}}^{\ge 1}$ for all $M \in \QC{\mathfrak{X}}^{\ge 0}$ by Proposition \ref{equivcondfflat}
  (\cite{SAG} Proposition 9.1.3.1 shows that $\QC{\mathfrak{X}}$ is the stablization of the Grothendieck prestable category $\QC{\mathfrak{X}}_{\ge 0}$).
  Hence, $i(\fib(\eta_A)^{\otimes k}) \in \mathrm{Fun}^{ex}(\Coh{\mathfrak{X}},\Sp)^{\ge k}$.

  Now we have
  \begin{equation}
    \Hom_{\QC{\mathfrak{X}}}(\fib(\eta_A)^{\otimes k},\mathcal{O}_{\mathfrak{X}})
    \cong \Hom_{\mathrm{Fun}^{ex}(\Coh{\mathfrak{X}},\Sp)}(i(\fib(\eta_A)^{\otimes k}),i(\mathcal{O})) \in \Sp_{\ge k-n-cd(\mathfrak{X})}
  \end{equation}
  since the injective dimension of $i(\mathcal{O}_{\mathfrak{X}})$ is at most $cd(\mathfrak{X})+n$ by Lemma \ref{cohdimandinjdim}.
  Hence the above Hom space $\Hom_{\QC{\mathfrak{X}}}(\fib(\eta_A)^{\otimes k},\mathcal{O}_{\mathfrak{X}})$ is connected when $k > cd(\mathfrak{X})+n$, as desired.
\end{proof}

\begin{theorem} \label{mainthm1.5}
  Let $\mathfrak{X}$ be a quasi-geometric stack with a faithfully flat map from 
  a discrete regular Noetherian commutative ring of Krull dimension $n$.
  Then every faithfully flat algebra in $\QC{\mathfrak{X}}$ is descendable of index at most $cd(\mathfrak{X})+n+1$.
\end{theorem}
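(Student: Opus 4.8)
The plan is to bound the injective dimension of $\mathcal{O}_{\mathfrak{X}}$ directly inside $\QC{\mathfrak{X}}^{\heart}$ — in contrast to Theorem \ref{mainthm1}, where this was only possible after passing to the functor category $\mathrm{Fun}^{ex}(\Coh{\mathfrak{X}},\Sp)$ — and then to repeat verbatim the final connectivity estimate of Theorem \ref{mainthm1}. The regularity of $R$ is used precisely to make $\mathcal{O}_{\mathfrak{X}}$ of finite injective dimension in $\QC{\mathfrak{X}}$ itself, and it is this that lets us drop the perfectness hypothesis on $\mathfrak{X}$ (the functor-category argument of Theorem \ref{mainthm1} is unavailable here, since the objects $\fib(\eta_A)^{\otimes k}$ we must pair against $\mathcal{O}_{\mathfrak{X}}$ are not perfect).

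First I would record the two structural inputs on the heart. Writing $p : \Spec{R} \to \mathfrak{X}$ for the cover, the functor $p^{*}$ is t-exact and conservative, so $x \in \QC{\mathfrak{X}}^{\heart}$ is Noetherian exactly when $p^{*}x$ is a finitely generated $R$-module; since $R$ is Noetherian this exhibits $\QC{\mathfrak{X}}^{\heart}$ as a locally Noetherian Grothendieck category whose Noetherian objects are the coherent sheaves $\Coh{\mathfrak{X}}^{\heart}$. The key point — where regularity enters — is that every such $x$ is a \emph{perfect} complex: $p^{*}x$ is a coherent module over the regular ring $R$, hence perfect, and perfectness descends along the faithfully flat cover $p$ (see \cite{SAG}). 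In particular $x^{\vee} := \iHom(x,\mathcal{O}_{\mathfrak{X}})$ is defined and commutes with $p^{*}$.

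Next I would bound $\Hom(x,\mathcal{O}_{\mathfrak{X}})$ for $x \in \Coh{\mathfrak{X}}^{\heart}$. Pulling back, $p^{*}(x^{\vee}) \cong (p^{*}x)^{\vee} \cong R\Hom_{R}(p^{*}x,R)$, which, because $R$ has global dimension $n$, is concentrated in cohomological degrees $[0,n]$; as coconnectivity and amplitude are detected by the t-exact conservative $p^{*}$, we get $x^{\vee} \in \QC{\mathfrak{X}}^{[0,n]}$. Applying global sections, whose cohomological dimension is $cd(\mathfrak{X})$, yields
\[
  \Hom_{\QC{\mathfrak{X}}}(x,\mathcal{O}_{\mathfrak{X}}) \cong \Gamma(\mathfrak{X},x^{\vee}) \in \Sp^{[0,\,n+cd(\mathfrak{X})]},
\]
so that $\Ext^{i}(x,\mathcal{O}_{\mathfrak{X}}) = 0$ for $i > n+cd(\mathfrak{X})$ and all coherent $x$. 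I would then promote this from coherent objects to all of $\QC{\mathfrak{X}}^{\heart}$ by Auslander's transfinite argument, exactly as in the proof of Proposition \ref{lambdabound2}: any $Y$ is a transfinite union of subobjects whose successive quotients are quotients of coherent sheaves (here local Noetherianity is used, and Corollary \ref{kappanoetherianext} guarantees these quotients are again coherent), and one builds a nullhomotopy of any map $Y \to \mathcal{O}_{\mathfrak{X}}[n+cd(\mathfrak{X})+1]$ by transfinite induction, the successor step reducing to the vanishing just established. This shows $\mathcal{O}_{\mathfrak{X}}$ has injective dimension at most $n + cd(\mathfrak{X})$ in $\QC{\mathfrak{X}}$.

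Finally, the descendability estimate is formally identical to the end of Theorem \ref{mainthm1}. For a faithfully flat algebra $A$ with unit $\eta_{A}$, iterating Proposition \ref{equivcondfflat} gives $\fib(\eta_{A})^{\otimes k} \in \QC{\mathfrak{X}}^{\ge k}$, and pairing with an object of injective dimension $n+cd(\mathfrak{X})$ forces
\[
  \Hom_{\QC{\mathfrak{X}}}(\fib(\eta_{A})^{\otimes k},\mathcal{O}_{\mathfrak{X}}) \in \Sp_{\ge\, k-n-cd(\mathfrak{X})},
\]
which is connected once $k > n + cd(\mathfrak{X})$; hence $A$ is descendable of index at most $cd(\mathfrak{X})+n+1$. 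I expect the main obstacle to be the two structural claims about $\QC{\mathfrak{X}}^{\heart}$ in the second paragraph — local Noetherianity, and the identification of Noetherian objects with perfect complexes via faithfully flat descent of perfectness — since these must be established for a merely quasi-geometric $\mathfrak{X}$, without assuming it is a perfect stack; everything downstream (the amplitude bound for $x^{\vee}$ and the Auslander induction) is then routine given the machinery of Section \ref{section2} and its predecessor.
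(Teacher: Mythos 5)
Your proposal is correct and follows essentially the same route as the paper's own proof: bound the injective dimension of $\mathcal{O}_{\mathfrak{X}}$ in $\QC{\mathfrak{X}}$ by $cd(\mathfrak{X})+n$ using local Noetherianity of $\QC{\mathfrak{X}}^{\heart}$ with Noetherian objects $\Coh{\mathfrak{X}}^{\heart}$ (the paper simply cites \cite{SAG} Proposition 9.5.2.3 for this), the perfectness and $[0,n]$-amplitude of $x^{\vee}$ granted by regularity of the cover, and the transfinite Auslander induction of Proposition \ref{lambdabound2}, then conclude by the same connectivity estimate as in Theorem \ref{mainthm1}. The only point you leave implicit is that iterating Proposition \ref{equivcondfflat} to get $\fib(\eta_A)^{\otimes k} \in \QC{\mathfrak{X}}^{\ge k}$ requires $\mathcal{O}_{\mathfrak{X}}$ to be coconnective, which holds because a stack admitting a faithfully flat cover by a discrete ring is classical---exactly the observation with which the paper's proof begins.
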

\begin{proof}
  If $\mathfrak{X}$ is a classical quasi-geometric stack, the structure sheaf $\mathcal{O}_{\mathfrak{X}}$
  lies in the heart of $\QC{\mathfrak{X}}$. Therefore, for any faithfully flat algebra $A$ with unit map $\eta_A : \mathcal{O}_{\mathfrak{X}} \to A$,
  we have $\fib(\eta_A)^{\otimes k} \cong \fib(\eta_A)^{\otimes k} \otimes \mathcal{O}_{\mathfrak{X}} \in \QC{\mathfrak{X}}^{\ge k}$.
  Hence, it suffices to show that $\mathcal{O}_{\mathfrak{X}}$ has injective dimension $\le cd(\mathfrak{X})+n$ in $\QC{\mathfrak{X}}$.

  By \cite{SAG} Proposition 9.5.2.3, we know that $\QC{\mathfrak{X}}^{\heart}$ is a locally Noetherian Grothendieck abelian category
  with Noetherian objects given by $\Coh{\mathfrak{X}}^{\heart}$.
  Hence, any $X \in \QC{\mathfrak{X}}^{\heart}$ can be written as an increasing
  union indexed by ordinals as in Proposition \ref{lambdabound2}, where the subquotients are Noetherian objects. Hence, by the argument
  in Proposition \ref{lambdabound2}, we can deduce the injective dimension of $\mathcal{O}_{\mathfrak{X}}$ is bounded by the highest nonvanishing
  cohomology group of $\Hom_{\QC{\mathfrak{X}}}(x,\mathcal{O}_{\mathfrak{X}})$, where $x \in \Coh{\mathfrak{X}}^{\heart}$. However, as $x$ is perfect with
  $x^{\vee} \in \QC{\mathfrak{X}}^{[0,n]}$ by regularity of the faithfully flat cover--we have $\Hom_{\QC{\mathfrak{X}}}(x,\mathcal{O}_{\mathfrak{X}}) \in \Sp^{[0,cd(\mathfrak{X})+n]}$,
  as desired.

\end{proof}
\begin{remark}
  The argument above is similar to the one given in Proposition 5.2.5 of \cite{heyermann},
  who proved this result in the case where $\mathfrak{X}$ is the classifying stack of a profinite group (thought of as an affine group scheme).
\end{remark}

\begin{corollary} \label{autoperfect}
  Let $\mathfrak{X}$ be a quasi-geometric stack of finite cohomological dimension with a faithfully flat map from 
  a discrete regular Noetherian commutative ring of Krull dimension $n$. Then $\mathfrak{X}$ is a perfect stack.
\end{corollary}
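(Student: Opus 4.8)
The goal is to show that the canonical functor $\Ind{\Perf{\mathfrak{X}}} \to \QC{\mathfrak{X}}$ is an equivalence, which for the quasi-geometric stack $\mathfrak{X}$ is exactly the assertion that $\mathfrak{X}$ is perfect. I would reduce this to two points: (a) the perfect complexes generate $\QC{\mathfrak{X}}$ under colimits, and (b) the unit $\mathcal{O}_{\mathfrak{X}}$ is a compact object. These suffice: for perfect $K$ one has $\Hom_{\QC{\mathfrak{X}}}(K,-) \cong \Gamma(K^{\vee} \otimes -)$ with $K^{\vee}\otimes -$ colimit-preserving, so (b) makes every perfect complex compact; then (a) exhibits $\QC{\mathfrak{X}}$ as compactly generated by a set of perfect complexes, whence its compact objects form the thick subcategory generated by these, which—every perfect complex being compact—is exactly $\Perf{\mathfrak{X}}$.

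For (a) I would exploit the regular Noetherian cover exactly as in the proof of Theorem \ref{mainthm1.5}: for $x \in \Coh{\mathfrak{X}}^{\heart}$ the pullback $p^{*}x$ lies in $\Coh{R}^{\heart}$ and is therefore perfect over the regular ring $R$, and perfectness (almost perfectness together with finite Tor-amplitude) descends along the faithfully flat, t-exact, conservative functor $p^{*}$; hence $\Coh{\mathfrak{X}}^{\heart} \subseteq \Perf{\mathfrak{X}}$. By \cite{SAG} Proposition 9.5.2.3 the heart $\QC{\mathfrak{X}}^{\heart}$ is locally Noetherian with Noetherian objects $\Coh{\mathfrak{X}}^{\heart}$, so every object of the heart is a filtered colimit of objects of $\Coh{\mathfrak{X}}^{\heart} \subseteq \Perf{\mathfrak{X}}$. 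As the t-structure on $\QC{\mathfrak{X}}$ is complete (right-complete since $\QC{\mathfrak{X}} \cong \Sp(\QC{\mathfrak{X}}_{\ge 0})$, and left-complete as used in the proof of Theorem \ref{mainthm1}) and compatible with filtered colimits, the localizing subcategory generated by $\Coh{\mathfrak{X}}^{\heart}$ contains the heart, hence all bounded objects, and finally all of $\QC{\mathfrak{X}}$; thus $\Perf{\mathfrak{X}}$ generates.

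The main obstacle is (b), where finite cohomological dimension is indispensable: I must show $\Gamma = \Hom_{\QC{\mathfrak{X}}}(\mathcal{O}_{\mathfrak{X}},-)$ preserves filtered colimits. The amplitude bound $\Gamma(\QC{\mathfrak{X}}^{\ge 0}) \subseteq \Sp^{[0,cd(\mathfrak{X})]}$ follows from the hypothesis, first on the heart, then on bounded objects by dévissage, and on connective objects via the Postnikov limit together with left-completeness. The heart-level statement that each $H^{i}(\mathfrak{X},-)$ commutes with filtered colimits I would obtain from descent along the \v{C}ech nerve of $p$: because $\mathfrak{X}$ has quasi-affine diagonal, the terms $\Spec{R} \times_{\mathfrak{X}} \cdots \times_{\mathfrak{X}} \Spec{R}$ are quasi-affine, and cohomology of these qcqs schemes commutes with filtered colimits, so the $E_{1}$-page of the descent spectral sequence does; finiteness of $cd(\mathfrak{X})$ bounds the relevant entries contributing to each total degree, and exactness of filtered colimits then transports the statement to the abutment. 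Combining the amplitude bound with this termwise commutation—only finitely many cohomology groups being nonzero—shows $\Gamma$ commutes with filtered colimits on $\QC{\mathfrak{X}}^{\ge 0}$, and hence on all of $\QC{\mathfrak{X}}$ by completeness, so $\mathcal{O}_{\mathfrak{X}}$ is compact. The delicate point to make rigorous is the convergence and finiteness of this descent spectral sequence, which is precisely where the finite-cohomological-dimension hypothesis does its work.
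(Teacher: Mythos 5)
Your skeleton (compactness of $\mathcal{O}_{\mathfrak{X}}$ plus generation by perfect complexes) matches the paper's, and your part (b) is a correct, if laborious, re-derivation of \cite{SAG} Proposition 9.1.5.3, which the paper simply cites. Your observation that $\Coh{\mathfrak{X}}^{\heart} \subseteq \Perf{\mathfrak{X}}$ by regularity and flat descent of Tor-amplitude is also fine, and agrees with what the paper does inside Theorem \ref{mainthm1.5}. The genuine problem is the final step of part (a): from ``the localizing subcategory generated by $\Coh{\mathfrak{X}}^{\heart}$ contains all bounded objects'' you conclude that it contains ``finally all of $\QC{\mathfrak{X}}$'' by appealing to completeness of the t-structure. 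That inference is false as a general principle. Left-completeness exhibits an object as the \emph{limit} of its truncations, and localizing subcategories are closed under colimits, not limits; right-completeness only reduces you to connective (bounded-below) objects, which need not lie in the localizing subcategory generated by the bounded ones. Concretely: $\Mod_{ku}$ is compactly generated, and its t-structure is left-complete, right-complete, and compatible with filtered colimits, yet $\Hom_{ku}(b,KU[m]) = 0$ for every bounded $ku$-module $b$ and every shift $m$ (such a $b$ is a finite extension of shifted discrete modules, which are modules over $H\mathbb{Z} = \cofib(\beta)$, while the Bott class $\beta$ is invertible on $KU$, so $\Hom_{ku}(H\mathbb{Z},KU)=0$). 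Hence bounded objects fail to generate $\Mod_{ku}$ even though $KU \neq 0$. Nothing in your argument invokes the discreteness of $R$ or the finite cohomological dimension at this point, so nothing rules out this kind of failure.

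This missing step is precisely where the paper uses its main theorem, and it is the actual content of the corollary. In the paper's proof, Proposition \ref{coverfflat} and Theorem \ref{mainthm1.5} show that $p_*\mathcal{O}_{\Spec{R}}$ is a descendable algebra; by \cite{descendable} Proposition 3.20 every object of $\QC{\mathfrak{X}}$ is then a retract of a \emph{finite} colimit of objects $p_*\mathcal{O}_{\Spec{R}} \otimes \mathcal{F} \cong p_*p^*\mathcal{F}$; since $R$ is discrete, $\QC{\Spec{R}}$ is generated under colimits and shifts by $\mathcal{O}_{\Spec{R}}$, and $p_*$ preserves colimits, so everything reduces to the single coconnective object $p_*\mathcal{O}_{\Spec{R}}$; finally, coconnective objects lie in $\Ind{\Perf{\mathfrak{X}}}$ because that inclusion is t-exact and an isomorphism on hearts by \cite{SAG} Proposition 9.5.2.3. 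In other words, the passage from bounded (or coconnective) objects to all of $\QC{\mathfrak{X}}$ is bought with descendability --- the uniform, finite-stage convergence of the descent tower --- not with completeness of the t-structure. To repair your proof you must either quote Theorem \ref{mainthm1.5} as the paper does, or supply an independent argument that every connective object of $\QC{\mathfrak{X}}$ lies in the localizing subcategory generated by the heart; as stated, your part (a) assumes exactly what needs to be proved.
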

\begin{proof}
  Any quasi-geometric stack with finite cohomological dimension has a compact structure sheaf,
  by \cite{SAG} Proposition 9.1.5.3. Therfore, it suffices to check that the perfect
  complexes (= compact objects by the same proposition in \textit{loc. cit.}) generate $\QC{\mathfrak{X}}$.
  Let $p: \Spec{R} \to \mathfrak{X}$ be a faithfully flat map from a regular Noetherian ring of dimension $n$. By Proposition
  \ref{coverfflat}, $p_*\mathcal{O}_{\Spec{R}}$ is a faithfully flat algebra in $\QC{\mathfrak{X}}$, therefore it is descendable by Theorem \ref{mainthm1.5}.
  Therefore, as $\mathcal{O}_{\mathfrak{X}}$ is a retract of a finite colimit of objects of the form $p_*\mathcal{O}_{\Spec{R}} \otimes \mathcal{F}$ by
  \cite{descendable} Proposition 3.20, the same is also true for any object in $\QC{\mathfrak{X}}$. Hence, it suffices to show that
  any object of the form $p_*\mathcal{O}_{\Spec{R}} \otimes \mathcal{F} \cong p_*p^*\mathcal{F}$ (by \cite{SAG} Corollary 6.3.4.3)
  is in $\Ind{\Perf{\mathfrak{X}}}$.

  Also by \cite{SAG} Corollary 6.3.4.3, $p_*$ preserves colimits. As $\QC{\Spec{R}}$ is generated by the structure sheaf
  by colimits and shifts, it suffices to check that $p_*{\mathcal{O}_{\Spec{R}}}$ is in $\Ind{\Perf{\mathfrak{X}}}$.
  The inclusion functor $\Ind{\Perf{\mathfrak{X}}} \to \QC{\mathfrak{X}}$ is a t-exact functor between stablizations
  of Grothendieck prestable categories (\cite{SAG} Example C.1.4.4 and Proposition 9.1.3.1) which is an isomorphism on the heart
  by \cite{SAG} Proposition 9.5.2.3. Hence $\Ind{\Perf{\mathfrak{X}}}$ contains all coconnective objects of $\QC{\mathfrak{X}}$,
  and therefore $p_*\mathcal{O}_{\Spec{R}}$.
\end{proof}

\begin{definition}
  Let $\kappa$ be a regular cardinal. A connective $\mathbb{E}_1$-ring $R$ is called left $\kappa$-Artinian if for all $m \ge 0$,
  $\pi_m(R)$ is an $\kappa$-Artinian object in $\Mod_R^{\heart} \cong \Mod_{\pi_0(R)}^{\heart}$.
\end{definition}

\begin{theorem} \label{mainthm2}
  Let $\mathfrak{X}$ be a perfect stack with a faithfully flat map $f: \Spec{R} \to \mathfrak{X}$.
  If $R$ is $\aleph_n$-Artinian,
  then every faithfully flat algebra in $\QC{\mathfrak{X}}$ is descendable with index at most $cd(\mathfrak{X})+n+1$.
\end{theorem}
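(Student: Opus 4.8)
The plan is to run the proof of Theorem \ref{mainthm1} essentially verbatim, replacing only the step that bounds the $\lambda$-dimension of $\opposite{\Coh{\mathfrak{X}}}$. In Theorem \ref{mainthm1} that bound came from the dimension-of-support filtration together with Gabriel's theorem, which forced $R$ to be Noetherian of finite Krull dimension; here I would instead obtain it directly from Proposition \ref{lambdabound2}. As before, $\Coh{\mathfrak{X}}$ is the full subcategory of bounded almost perfect sheaves, which carries a bounded (hence right-bounded) t-structure and is idempotent-complete by \cite{SAG} Remark C.6.7.4; the opposite category $\opposite{\Coh{\mathfrak{X}}}$ is then again small, idempotent-complete, and right-boundedly t-structured, with heart $\opposite{(\Coh{\mathfrak{X}}^{\heart})}$. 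Since an abelian category $\mathscr{A}$ is $\aleph_n$-Artinian exactly when $\opposite{\mathscr{A}}$ is $\aleph_n$-Noetherian, Proposition \ref{lambdabound2} applied to $\opposite{\Coh{\mathfrak{X}}}$ will give $\lambda$-dimension at most $n$ as soon as I know that $\Coh{\mathfrak{X}}^{\heart}$ is $\aleph_n$-Artinian.

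The heart of the matter is therefore to prove that $\Coh{\mathfrak{X}}^{\heart}$ is $\aleph_n$-Artinian. First I would record that $p^{*} : \QC{\mathfrak{X}}^{\heart} \to \Mod_{\pi_0(R)}^{\heart}$ is exact and conservative (\cite{SAG} Remark 9.1.3.4). Being left exact and conservative, it induces for every $x \in \QC{\mathfrak{X}}^{\heart}$ an order-embedding of subobject posets $\mathrm{Sub}_{\QC{\mathfrak{X}}^{\heart}}(x) \hookrightarrow \mathrm{Sub}_{\Mod_{\pi_0(R)}^{\heart}}(p^{*}(x))$: order-preservation is clear, and for order-reflection, if $p^{*}(y_1) \le p^{*}(y_2)$ then $p^{*}(y_1 \cap y_2) = p^{*}(y_1) \cap p^{*}(y_2) = p^{*}(y_1)$, so the monomorphism $y_1 \cap y_2 \hookrightarrow y_1$ becomes an isomorphism after $p^{*}$ and hence is one by conservativity, giving $y_1 \le y_2$. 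Recalling that $x$ is $\aleph_n$-Artinian precisely when every cofiltered family of subobjects of $x$ admits a coinitial subfamily of cardinality $< \aleph_n$, and that such coinitial subfamilies transport backwards along an order-embedding (by order-reflection), I conclude that $x$ is $\aleph_n$-Artinian in $\QC{\mathfrak{X}}^{\heart}$ whenever $p^{*}(x)$ is $\aleph_n$-Artinian in $\Mod_{\pi_0(R)}^{\heart}$; and $\aleph_n$-Artinianness in the ambient category $\QC{\mathfrak{X}}^{\heart}$ a fortiori gives it in $\Coh{\mathfrak{X}}^{\heart}$, whose subobject posets are subposets of the former.

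It then remains to see that for $x \in \Coh{\mathfrak{X}}^{\heart}$ the module $p^{*}(x) \in \Coh{R}^{\heart}$ is $\aleph_n$-Artinian in $\Mod_{\pi_0(R)}^{\heart}$. Every object of $\Coh{R}^{\heart}$ is a finitely generated $\pi_0(R)$-module, hence a quotient of some $\pi_0(R)^{\oplus k}$; since $R$ is $\aleph_n$-Artinian, $\pi_0(R)$ is an $\aleph_n$-Artinian object of $\Mod_{\pi_0(R)}^{\heart}$, and the collection of $\aleph_n$-Artinian objects is thick by the dual of Corollary \ref{kappanoetherianext}, so it is closed under finite direct sums and quotients and thus contains $p^{*}(x)$. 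With $\lambda$-dimension of $\opposite{\Coh{\mathfrak{X}}}$ bounded by $n$, the rest is a transcription of Theorem \ref{mainthm1}: using that $\mathfrak{X}$ is perfect I would form the embedding $i : \QC{\mathfrak{X}} \to \mathrm{Fun}^{ex}(\Coh{\mathfrak{X}}, \Sp)$, $F \mapsto \Gamma(F \otimes \_)$, fully faithful on flat objects; faithful flatness of $A$ with Proposition \ref{equivcondfflat} gives $i(\fib(\eta_A)^{\otimes k}) \in \mathrm{Fun}^{ex}(\Coh{\mathfrak{X}}, \Sp)^{\ge k}$, while Lemma \ref{cohdimandinjdim} bounds the injective dimension of $i(\mathcal{O}_{\mathfrak{X}})$ by $cd(\mathfrak{X}) + n$. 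Hence $\Hom_{\QC{\mathfrak{X}}}(\fib(\eta_A)^{\otimes k}, \mathcal{O}_{\mathfrak{X}})$ is connected for $k > cd(\mathfrak{X}) + n$, yielding descendability of index at most $cd(\mathfrak{X}) + n + 1$.

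I expect the main obstacle to be precisely the $\aleph_n$-Artinianness of $\Coh{\mathfrak{X}}^{\heart}$, and within it the order-embedding of subobject posets under $p^{*}$. The subtlety is that, unlike in Theorem \ref{mainthm1}, $R$ is not assumed Noetherian, so $\Coh{\mathfrak{X}}^{\heart}$ need not be closed under subobjects in $\QC{\mathfrak{X}}^{\heart}$; this is exactly why I would prove the stronger statement that $x$ is $\aleph_n$-Artinian already in $\QC{\mathfrak{X}}^{\heart}$ and only then restrict to the subcategory $\Coh{\mathfrak{X}}^{\heart}$. Once this is secured, no further use of the dimension filtration or of Noetherianity of $R$ is needed, and everything downstream coincides with Theorem \ref{mainthm1}.
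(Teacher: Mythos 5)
There is a genuine gap, and it is located exactly at the step you kept unchanged rather than the step you modified. Your proposal retains $\Coh{\mathfrak{X}}$, the bounded almost perfect sheaves, and asserts that it ``carries a bounded (hence right-bounded) t-structure.'' When $R$ is only assumed $\aleph_n$-Artinian this is unjustified and false in general: the t-structure on $\Coh{R}$ used in Theorem \ref{mainthm1} comes from \cite{HA} Proposition 7.2.4.18, which requires $R$ to be Noetherian, and $\aleph_n$-Artinianness is a cardinality condition, not a finiteness condition (by Remark \ref{cardinalremark}, any discrete ring with $2^{|R|} < \aleph_n$ qualifies, and such rings need not even be coherent). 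Over a non-coherent ring, truncations of perfect complexes need not be almost perfect: if $\ker(R^a \to R^b)$ is not finitely generated, then $\tau_{\ge 0}\fib(R^a \to R^b)$ is a truncation of a perfect complex whose $\pi_0$ is not finitely generated, hence not almost perfect. This breaks your argument at two independent points. First, Proposition \ref{lambdabound2} (and indeed the very notion of $\lambda$-dimension, Definition \ref{lambdadef}) applies only to stable categories \emph{equipped with} a right-bounded t-structure, so it cannot be invoked for $\opposite{\Coh{\mathfrak{X}}}$ at all. Second, the embedding step of Theorem \ref{mainthm1} that you propose to transcribe verbatim rests on the identification $i(K) \cong \colim_k \Hom(\tau_{\le k}K^{\vee},\_)$ and the $\Sp$-enriched Yoneda lemma applied to the representable functors $\Hom(\tau_{\le k}K^{\vee},\_)$; this requires $\tau_{\le k}K^{\vee}$ to be an \emph{object} of the indexing category, which fails for $\Coh{\mathfrak{X}}$ in the non-Noetherian case. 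The subtlety you flagged yourself (hearts not closed under subobjects) is the minor issue; the fatal one is that your indexing category does not have the t-structure, nor the truncated perfect complexes, needed to run the machine.

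The paper's proof is designed precisely to avoid this: it replaces $\Coh{\mathfrak{X}}$ by $(f^*)^{-1}[(\Mod_R)^b_{\aleph_n\mathrm{-Artin}}]$, the bounded quasicoherent sheaves whose pullback to $\Spec{R}$ has $\aleph_n$-Artinian homotopy sheaves. This category is closed under truncation by construction, so it inherits a bounded t-structure, and it contains all truncations of perfect complexes because the modules with $\aleph_n$-Artinian homotopy groups form a thick stable subcategory containing $R$ (Corollary \ref{kappanoetherianext}, dualized, together with the hypothesis that each $\pi_m(R)$ is $\aleph_n$-Artinian). Your central computation---that an exact conservative functor of abelian categories order-embeds subobject posets and therefore transports $\kappa$-Artinianness backwards along $f^*$---is correct, and it is in fact the implicit reason the heart of the paper's replacement category is $\aleph_n$-Artinian, so that portion of your work survives; but it must be applied to the paper's category, not to $\Coh{\mathfrak{X}}^{\heart}$. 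Once the category is corrected, the rest of your transcription of Theorem \ref{mainthm1} goes through as you describe.
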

\begin{proof}
  The proof is the same as for Theorem \ref{mainthm1}, except
  we replace the category of bounded almost perfect quasicoherent sheaves, $\Coh{\mathfrak{X}}$,
  with the category of bounded quasicoherent sheaves
  whose pullback to $\Spec{R}$ has $\aleph_n$-Artinian cohomology sheaves, $(f^*)^{-1}[(\Mod_R)^b_{\aleph_n\mathrm{-Artin}}]$.
  Because $\aleph_n$-Artinian objects are closed under extensions (Corollary \ref{kappanoetherianext}), we see that truncations of perfect complexes
  are in this subcategory (because the collections of objects in $\Mod_R$ with $\aleph_n$-Artinian cohomology is stable and contains $R$
  by assumption).
  
  By Proposition \ref{lambdabound2}, $\left((f^*)^{-1}[(\Mod_R)^b_{\aleph_n\mathrm{-Artin}}]\right)^{\operatorname{op}}$ has $\lambda$-dimension at most $n$.
  The rest of the proof is identical to Theorem \ref{mainthm1}.
  
\end{proof}
\begin{remark} \label{cardinalremark}
  Unfortunately, Theorem \ref{mainthm2} is not entirely satisfactory.
  Namely, if $\mathfrak{X} = \Spec{R}$ is an affine scheme with $|\pi_*(R)| < \aleph_n$,
  the bound on descendability index based on the proof of \cite{descendable} Proposition 3.32 is $n$.
  However, not every $\mathbb{E}_{\infty}$-ring $R$ with $|\pi_*(R)| < \aleph_n$ is $\aleph_n$-Artinian.
  Nevertheless we can deduce that $R$ is $\aleph_n$-Artinian if $2^{|\pi_*(R)|}<\aleph_n$.
  This means that assuming the Generalized Continuum Hypothesis the gap between our bound and the best known bound for rings is not
  too large, while in some other models of ZFC our bound is close to trivial.

  
\end{remark}

\begin{question}
  In the proofs of Theorems \ref{mainthm1} and \ref{mainthm2}, we made use of a t-exact conservative functor
  $\mathscr{C} \to \mathscr{D}$ to show that the $\lambda$-dimension of $\mathscr{C}$ is bounded above
  by the $\lambda$-dimension of $\mathscr{D}$. We do not know if in general, the existence of a t-exact
  conservative functor between idempotent-complete stable categories with right-bounded t-structures
  implies this inequality of their $\lambda$-dimensions.
\end{question}

\bibliography{references.bib}{}

\begin{thebibliography}{{Sta}18}

\bibitem[AGH19]{kobstruct}
Benjamin Antieau, David Gepner, and Jeremiah Heller.
\newblock {$K$}-theoretic obstructions to bounded {$t$}-structures.
\newblock {\em Invent. Math.}, 216(1):241--300, 2019.

\bibitem[Aok24]{aoki}
Ko~Aoki.
\newblock On cohomology of locally profinite sets, 2024.

\bibitem[Aus55]{auslander}
Maurice Auslander.
\newblock On the dimension of modules and algebras. {III}. {G}lobal dimension.
\newblock {\em Nagoya Math. J.}, 9:67--77, 1955.

\bibitem[BL21]{burklundlevy}
Robert Burklund and Ishan Levy.
\newblock Some aspects of noncommutative geometry.
\newblock Preprint, dated December 30, 2021. Available at
  \url{https://ishanina.github.io/ncg.pdf}, 2021.

\bibitem[BS17]{bhattscholze}
Bhargav Bhatt and Peter Scholze.
\newblock Projectivity of the {W}itt vector affine {G}rassmannian.
\newblock {\em Invent. Math.}, 209(2):329--423, 2017.

\bibitem[Gab62]{gabriel}
Pierre Gabriel.
\newblock Des cat\'egories ab\'eliennes.
\newblock {\em Bull. Soc. Math. France}, 90:323--448, 1962.

\bibitem[GJ81]{grusonjensen}
L.~Gruson and C.~U. Jensen.
\newblock Dimensions cohomologiques reli\'ees aux foncteurs
  {$\varprojlim\sp{(i)}$}.
\newblock In {\em Paul {D}ubreil and {M}arie-{P}aule {M}alliavin {A}lgebra
  {S}eminar, 33rd {Y}ear ({P}aris, 1980)}, volume 867 of {\em Lecture Notes in
  Math.}, pages 234--294. Springer, Berlin, 1981.

\bibitem[Gob70]{goblot}
R\'emi Goblot.
\newblock Sur les d\'eriv\'es de certaines limites projectives. {A}pplications
  aux modules.
\newblock {\em Bull. Sci. Math. (2)}, 94:251--255, 1970.

\bibitem[Hin20]{hinich}
Vladimir Hinich.
\newblock Yoneda lemma for enriched {$\infty$}-categories.
\newblock {\em Adv. Math.}, 367:107129, 119, 2020.

\bibitem[HM24]{heyermann}
Claudius Heyer and Lucas Mann.
\newblock 6-functor formalisms and smooth representations, 2024.

\bibitem[Lur09]{HTT}
Jacob Lurie.
\newblock {\em Higher topos theory}, volume 170 of {\em Annals of Mathematics
  Studies}.
\newblock Princeton University Press, Princeton, NJ, 2009.

\bibitem[Lur17]{HA}
Jacob Lurie.
\newblock Higher algebra, 09 2017.
\newblock Available online at \url{https://www.math.ias.edu/~lurie/}.

\bibitem[Lur18]{SAG}
Jacob Lurie.
\newblock Spectral algebraic geometry, 2018.
\newblock Available for download at https://www.math.ias.edu/~lurie/.

\bibitem[Mat16]{descendable}
Akhil Mathew.
\newblock The galois group of a stable homotopy theory.
\newblock {\em Advances in Mathematics}, 291:403--541, 2016.

\bibitem[Mit72]{mitchell}
Barry Mitchell.
\newblock Rings with several objects.
\newblock {\em Advances in Math.}, 8:1--161, 1972.

\bibitem[MM24]{mathewmondal}
Akhil Mathew and Shubhodip Mondal.
\newblock Affine stacks and derived rings.
\newblock Preprint available at
  \url{https://personal.math.ubc.ca/~smondal/papers/affinestacks.pdf}, 2024.

\bibitem[NS18]{nikolausscholze}
Thomas Nikolaus and Peter Scholze.
\newblock On topological cyclic homology.
\newblock {\em Acta Math.}, 221(2):203--409, 2018.

\bibitem[Oso68]{osofsky}
B.~L. Osofsky.
\newblock Upper bounds on homological dimensions.
\newblock {\em Nagoya Math. J.}, 32:315--322, 1968.

\bibitem[Ram24]{dualizable}
Maxime Ramzi.
\newblock The formal theory of dualizable presentable $\infty$-categories.
\newblock {\em Preprint available online}, 2024.

\bibitem[{Sta}18]{stacks}
The {Stacks Project Authors}.
\newblock \textit{Stacks Project}.
\newblock \url{https://stacks.math.columbia.edu}, 2018.

\bibitem[Zel24]{zelich}
Ivan Zelich.
\newblock Faithfully flat ring maps are not descendable, 2024.

\end{thebibliography}
\bibliographystyle{alpha}
\end{document}